\newenvironment{remark}{\noindent\textbf{Remark:}\hspace{1em}}{}
\newcommand{\Z}{\mathbb{Z}}
\newcommand{\one}{\mathds{1}}
\newcommand{\ad}[2]{Ad_{#2}{#1}}%conjugation
\newcommand{\sm}[1]{\langle{#1}\rangle}%Samelson product
\newcommand{\cm}[1]{c({#1})}%commuator map c(f, g)
\theoremstyle{plain}
\newtheorem{thm}{Theorem}[section]
\newtheorem{prop}[thm]{Proposition}
\newtheorem{lemma}[thm]{Lemma}
\newtheorem{cor}[thm]{Corollary}
\theoremstyle{definition}
\newtheorem{dfn}{Definition}[section]
\begin{document}

\title{The odd primary order of the commutator on low rank Lie groups}
\author{Tseleung So}
\address{Mathematical Sciences, University of Southampton, SO17 1BJ, UK}
\email{tls1g14@soton.ac.uk}
%\urladdr{http://www.southampton.ac.uk/maths/postgraduate/research_students/tls1g14.page}
\thanks{}

\subjclass[2010]{55Q15, %whitehead product n generalizations
 %55Q52%htpy gp of special sp
 57T20%htpy gp of topo gp and homogeneous sp
 }

\keywords{Lie group, Samelson products, homotopy nilpotence}

\date{}%you can make this automated

\begin{abstract}
Let $G$ be a simple, simply-connected, compact Lie group of low rank relative to a fixed prime~$p$. After localization at $p$, there is a space $A$ which ``generates'' $G$ in a certain sense. Assuming $G$ satisfies a homotopy nilpotency condition relative to $p$, we show that the Samelson product $\sm{\one_G,\one_G}$ of the identity of $G$ equals the order of the Samelson product~$\sm{\imath,\imath}$ of the inclusion $\imath:A\to G$. Applying this result, we calculate the orders of~$\sm{\one_G,\one_G}$ for all~$p$-regular Lie groups and give bounds of the orders of $\sm{\one_G,\one_G}$ for certain quasi-$p$-regular Lie groups.
\end{abstract}

\maketitle

\section{Introduction}
In this paper, $G$ is a simple, simply-connected, compact Lie group and $p$ is an odd prime. By a theorem of Hopf, $G$ is rationally homotopy equivalent to a product of spheres~$\prod^l_{i=1}S^{2n_i-1}$, where $n_1\leq\cdots\leq n_l$. The sequence $(2n_1-1,\cdots,2n_l-1)$ is called the \emph{type} of $G$. Localized at $p$, it is known \cite{CN84,MNT77} that $G$ is homotopy equivalent to a product of H-spaces $\prod^{p-1}_{i=1}B_i$, and there exists a co-H-space $A$ and a map $\imath:A\to G$ such that $H_*(G)$ is the exterior algebra generated by $\imath_*(\tilde{H}_*(A))$. For $1\leq i\leq l$, if $B_i$ is $S^{2n_i-1}$, then we call $G$ \emph{$p$-regular}. If each $B_i$ is either $S^{2n_i-1}$ or $B(2n_i-1,2n_i+2p-3)$ that is the $S^{2n_i-1}$-bundle over $S^{2n_i+2p-3}$ classified by $\frac{1}{2}\alpha\in\pi_{2n_i+2p-4}(S^{2n_i-1})$, then we call $G$ \emph{quasi-$p$-regular}.

For any maps $f:X\to G$ and $g:Y\to G$, let $c(f,g):X\times Y\to G$ be a map sending~\mbox{$(x,y)\in X\times Y$} to their commutator $[x, y]=f(x)^{-1}g(y)^{-1}f(x)g(y)$. Then $c(f,g)$ descends to a map $\sm{f,g}:X\wedge Y\to G$. The map $\sm{f,g}$ is called the \emph{Samelson product} of $f$ and~$g$. The \emph{order} of $\sm{f,g}$ is defined to be the minimum number $k$ such that the composition
\[
k\circ\sm{f,g}:X\wedge Y\overset{\sm{f,g}}{\longrightarrow}G\overset{k}{\longrightarrow}G
\]
is null-homotopic, where $k:G\to G$ is the $k^\text{th}$-power map. In particular, when $f$ and $g$ are the identity map $\one_G$ of $G$, the Samelson product $\sm{\one_G,\one_G}$ is universal and we are interested in finding its order.

There is a notion of nilpotency in homotopy theory analogous to that for groups. Let~$c_1$ be the commutator map $c(\one_G,\one_G):G\times G\to G$, and let~\mbox{$c_n=c_1\circ(c_{n-1}\times\one_G)$} be the $n$-iterated commutator for $n>1$. The \emph{homotopy nilpotence class} of $G$ is the number $n$ such that $c_n$ is null-homotopic but $c_{n-1}$ is not. In certain cases the homotopy nilpotence class of $p$-localized $G$ is known. Kaji and Kishimoto \cite{kk10} showed that $p$-regular Lie groups have homotopy nilpotence class at most 3. When $G$ is quasi-$p$-regular and $p\geq7$, Kishimoto \cite{kishimoto09} showed that $SU(n)$ has homotopy nilpotence class at most 3, and Theriault \cite{theriault16} showed that exceptional Lie groups have homotopy nilpotence class at most 2.

Here we restrict $G$ to be a Lie group having low rank with respective to an odd prime $p$. That is, $G$ and $p$ satisfy:
\begin{equation}\label{retractile_list}
\begin{array}{l l}
SU(n)		&n\leq(p-1)(p-2)+1\\
Sp(n)		&2n\leq(p-1)(p-2)\\
Spin(2n+1)	&2n\leq(p-1)(p-2)\\
Spin(2n)	&2n-2\leq(p-1)(p-2)\\
G_2,F_4,E_6	&p\geq5\\
E_7,E_8		&p\geq7,
\end{array}
\end{equation}
In these cases, Theriault \cite{theriault07} showed that $\Sigma A$ is a retract of $\Sigma G$. Let $\sm{\imath,\imath}$ be the composition
\[
\sm{\imath,\imath}:A\wedge A\overset{\imath\wedge\imath}{\hookrightarrow}G\wedge G\overset{\sm{\one_G,\one_G}}{\longrightarrow}G.
\]
Then obviously the order of $\sm{\one_G,\one_G}$ is always greater than or equal to the order of $\sm{\imath,\imath}$. Conversely, we show that the order of $\sm{\imath,\imath}$ restricts the order of $\sm{\one_G,\one_G}$ under certain conditions.

\begin{thm}\label{main1}
Let $G$ be a compact, simply-connected, simple Lie group of low rank and let~$p$ be an odd prime. Localized at $p$, if the homotopy nilpotence class of $G$ is less than~$p^r+1$, then the order of the Samelson product $\sm{\one_G,\one_G}$ is $p^r$ if and only if the order of~$\sm{\imath,\imath}$ is $p^r$.
\end{thm}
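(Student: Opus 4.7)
The easy direction is immediate from naturality: since $\sm{\imath,\imath}=\sm{\one_G,\one_G}\circ(\imath\wedge\imath)$, the order of $\sm{\one_G,\one_G}$ is at least that of $\sm{\imath,\imath}$, so both directions of the ``iff'' reduce to the single statement that, under the nilpotence hypothesis, $p^r\cdot\sm{\imath,\imath}\simeq\ast$ implies $p^r\cdot\sm{\one_G,\one_G}\simeq\ast$. From this reduction, if $p^r\cdot\sm{\imath,\imath}$ is null then so is $p^r\cdot\sm{\one_G,\one_G}$, which combined with the inequality forces equality of orders whenever either order equals $p^r$.

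The strategy is to detect $\sm{\one_G,\one_G}$ through its restrictions to wedge summands of $G\wedge G$ built from $A$. Combining Theriault's retract $\Sigma A\hookrightarrow\Sigma G$ with the $p$-local H-space splitting $G\simeq\prod_i B_i$, I would produce a wedge decomposition of $G\wedge G$ whose summands include smash powers $A^{\wedge k}$, embedded via $\imath^{\wedge k}$ composed with iterated H-space multiplications. Applying the group commutator identity $[ab,c]=a[b,c]a^{-1}[a,c]$ to expand multiplicative factors into sums of brackets, the restriction of $\sm{\one_G,\one_G}$ to the $A^{\wedge k}$-summand should reduce, modulo sums of lower-order contributions, to an iterated Samelson product
\[
\tau_k=\sm{\imath,\sm{\imath,\ldots,\sm{\imath,\imath}\cdots}}:A^{\wedge k}\longrightarrow G
\]
of nested length $k$.

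The hypothesis on the homotopy nilpotence class forces $\tau_k\simeq\ast$ for all $k\geq p^r+1$, so only the finitely many brackets $\tau_2,\ldots,\tau_{p^r}$ contribute. I would then prove by induction on $k$ that $p^r\cdot\tau_k\simeq\ast$. The base case $k=2$ is the hypothesis. For $k\geq 3$, since $G$ is a topological group $[X,G]$ is an abelian group and the Samelson product is bilinear, giving
\[
p^r\cdot\tau_k = p^r\cdot\sm{\imath,\tau_{k-1}} = \sm{\imath,\,p^r\cdot\tau_{k-1}}\simeq\sm{\imath,\ast}\simeq\ast.
\]
Hence $p^r\cdot\sm{\one_G,\one_G}$ vanishes on every summand of $G\wedge G$, and is therefore null-homotopic.

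The main obstacle is the decomposition step: producing an explicit summand-wise description of $[G\wedge G,G]$ in which the restriction of $\sm{\one_G,\one_G}$ to each $A$-based summand is identifiable with an iterated bracket $\tau_k$ or, at worst, with a sum of such brackets whose order is still controlled by the induction. This requires assembling Theriault's stable retract with the unstable $p$-local H-space splitting of $G$, and carefully tracking how the H-space multiplication on $G$ interacts with the commutator map, so that the decomposition of $\sm{\one_G,\one_G}$ into nested Samelson products does not introduce correction terms that escape the order bound from the induction.
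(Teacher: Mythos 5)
Your skeleton matches the paper's: reduce to the $A$-generated summands of $G\wedge G$ via Theriault's retraction (the paper does this by extending $\imath$ to an H-map $\Omega\Sigma A\to G$ with a right homotopy inverse and invoking the James splitting), then handle each summand by induction starting from $p^r\circ\sm{\imath,\imath}\simeq\ast$. But the inductive step contains a genuine error. The claim that ``$[X,G]$ is an abelian group and the Samelson product is bilinear'' is false: $[X\times Y,G]$ is non-abelian for general $X,Y$, and even where the relevant homotopy set is abelian, $\sm{f,-}$ is not a homomorphism. The commutator identities give
\[
\cm{f,g\cdot g'}=\cm{f,g'}\cdot\cm{f,g}\cdot[[\tilde{f},\tilde{g}],\tilde{g}'],
\]
so passing a power $p^r$ across a Samelson product produces correction terms that are iterated commutators of strictly greater weight. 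Hence your identity $p^r\cdot\sm{\imath,\tau_{k-1}}=\sm{\imath,p^r\cdot\tau_{k-1}}$ does not hold, and the induction collapses. A telling symptom is that your induction, if valid, would never use the nilpotence hypothesis: the theorem would follow for every $G$ and every $r$, which is too strong.

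Controlling exactly these correction terms is the entire content of the paper's Section 3. The restriction of $\sm{\one_G,\one_G}$ to a summand is $\sm{m_k,m_l}$ with $m_k$ a $k$-fold pointwise product (not a nested bracket $\tau_k$), and expanding the product inside the commutator yields, via the identities $(\alpha\beta)^n=\alpha^n\beta^n\prod_{i=1}^{n-1}[\beta,\alpha^i]$ and $\prod_{i=1}^{n-1}[\beta,\alpha^i]=\prod_{j=1}^{n-1}c_j(\beta,\alpha,\dots,\alpha)^{\binom{n}{j+1}}$, a product of iterated commutators with binomial multiplicities. For $n=p^r$ the terms with $1\leq j\leq p^r-2$ die because $p^r$ divides $\binom{p^r}{j+1}$ and $\beta^{p^r}$ is null, while the top term $j=p^r-1$, whose coefficient $\binom{p^r}{p^r}=1$ is not divisible by $p^r$, is killed only by the hypothesis that the homotopy nilpotence class is less than $p^r+1$. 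To repair your argument you would need to replace the bilinearity claim with this commutator calculus (or an equivalent control of the deviation from bilinearity), keeping track of where the source spaces are co-H-spaces so that the requisite terms commute.
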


The strategy for proving Theorem~\ref{main1} is to extend $A\to G$ to an H-map $\Omega\Sigma A\to G$ which has a right homotopy inverse, that is to retract $[G\wedge G, G]$ off $[\Omega\Sigma A\wedge\Omega\Sigma A, G]$, and use commutator calculus to analyze the latter. Combine Theorem~\ref{main1} and the known results in~\cite{kk10, kishimoto09, theriault16} to get the following statement.

\begin{cor}\label{cor_intro}
The order of $\sm{\one_G,\one_G}$ equals the order of $\sm{\imath,\imath}$ when
\begin{itemize}
\item	$G$ is $p$-regular or;
\item	$p\geq7$ and $G$ is a quasi-$p$-regular Lie group which is one of $SU(n),F_4,E_6,E_7$ or $E_8$.
\end{itemize}
\end{cor}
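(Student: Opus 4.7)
The plan is to apply Theorem \ref{main1} to each of the listed groups $G$ at the specified prime $p$. Writing $p^r$ for the order of $\sm{\imath,\imath}$, which is finite and a $p$-power since $A$ and $G$ are finite $p$-local complexes and hence $[A\wedge A, G]$ is a finite $p$-group, the task reduces to verifying the hypothesis of Theorem \ref{main1}: that the homotopy nilpotence class of $G$ is strictly less than $p^r+1$.

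The nilpotence-class inputs I would invoke are precisely those already cited in the introduction. For the first bullet, Kaji and Kishimoto \cite{kk10} bound the nilpotence class of any $p$-regular Lie group by $3$. For the second bullet with $p\geq 7$, Kishimoto \cite{kishimoto09} bounds quasi-$p$-regular $SU(n)$ by $3$, and Theriault \cite{theriault16} bounds each of the quasi-$p$-regular exceptional Lie groups $F_4, E_6, E_7, E_8$ by $2$. In every case the bound is at most $3$, so Theorem \ref{main1} applies as soon as $p^r\geq 3$. Since $p$ is odd, this is automatic whenever $r\geq 1$, because then $p^r\geq p\geq 3$, and the conclusion of the theorem gives the equality of orders claimed.

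The only step that demands a touch more care is the boundary case $r=0$, i.e.\ $\sm{\imath,\imath}\simeq *$, which Theorem \ref{main1} does not directly address. Here I would appeal instead to the strategy outlined immediately after that theorem: the H-extension $\Omega\Sigma A\to G$ of $\imath$ admits a right homotopy inverse, which retracts $[G\wedge G, G]$ off $[\Omega\Sigma A\wedge\Omega\Sigma A, G]$; consequently $\sm{\one_G,\one_G}$ is detected by $\sm{\imath,\imath}$ through this retraction, so the vanishing of the latter forces the vanishing of the former. Once this short formal edge case is dispatched, the corollary is essentially a tabulation combining Theorem \ref{main1} with the three cited nilpotence bounds, and I do not foresee any substantive obstacle beyond the bookkeeping already encoded in the hypothesis of the main theorem.
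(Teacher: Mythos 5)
Your main case is the paper's argument: the paper likewise disposes of the corollary by feeding the nilpotence bounds of \cite{kk10,kishimoto09,theriault16} into Theorem~\ref{main1}, and for $r\geq 1$ your verification that $nil(G)\leq 3\leq p^r$ is correct. The problem is your treatment of the boundary case $r=0$. The workaround you propose --- that the retraction of $[G\wedge G,G]$ off $[\Omega\Sigma A\wedge\Omega\Sigma A,G]$ lets $\sm{\imath,\imath}$ ``detect'' $\sm{\one_G,\one_G}$, so that vanishing of the former forces vanishing of the latter --- is not correct. Under that retraction $\sm{\one_G,\one_G}$ corresponds to $\sm{\tilde{\imath},\tilde{\imath}}$, and via the James splitting $[\Omega\Sigma A\wedge\Omega\Sigma A,G]\cong\prod_{k,l}[A^{\wedge k}\wedge A^{\wedge l},G]$ this element has infinitely many components $\sm{m_k,m_l}$, of which $\sm{\imath,\imath}$ is only the $(k,l)=(1,1)$ one. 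The vanishing of that single component does not formally force the vanishing of the others; that implication is precisely Lemma~\ref{lemma_sm_mk_null}, whose proof occupies Section~3 and genuinely uses the hypothesis $nil(G)<p^r+1$. If your edge-case argument were valid, Theorem~\ref{main1} would hold with no nilpotence hypothesis at all, and the entire commutator calculus of the paper would be unnecessary.

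The edge case is easily repaired by applying Theorem~\ref{main1} with $r$ taken from the other side: let $p^{r_1}$ be the order of $\sm{\one_G,\one_G}$ and $p^{r_0}$ the order of $\sm{\imath,\imath}$, so that $r_0\leq r_1$ since $\sm{\imath,\imath}=\sm{\one_G,\one_G}\circ(\imath\wedge\imath)$. If $r_1=0$ then $r_0=0$ trivially. If $r_1\geq 1$ then $nil(G)\leq 3\leq p^{r_1}$, so Theorem~\ref{main1} applies with $r=r_1$ and the equivalence forces the order of $\sm{\imath,\imath}$ to be $p^{r_1}$ as well. This removes the need for any separate argument when $\sm{\imath,\imath}$ is null-homotopic, and is presumably the intended reading of the paper's one-line derivation.
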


On the one hand, there is no good method to calculate the order of $\sm{\one_G, \one_G}$ in general. A direct computation is not practical since one has to consider all the cells in $G\wedge G$ and their number grows rapidly when there is a slight increase in the rank of $G$. On the other hand, Corollary~\ref{cor_intro} says that for all $p$-regular Lie groups and most of the quasi-$p$-regular Lie groups, we can determine the order of $\sm{\one_G,\one_G}$ by computing the order of $\sm{\imath,\imath}$. The latter is easier to work with since $A$ has a much simpler CW-structure than $G$. To demonstrate the power of Theorem~\ref{main1}, we apply this result to compute the order of $\sm{\one_G,\one_G}$ for all $p$-regular cases and some quasi-$p$-regular cases.

\begin{thm}
For a $p$-localized Lie group $G$, the order of $\sm{\one_G,\one_G}$ is $p$ when
\[
\begin{array}{|c|c|c|}
\hline
G		&r=0	&r=1\\
\hline
SU(n)		&p>2n	&n\leq p<2n\\
Sp(n)		&p>4n	&2n<p<4n\\
Spin(2n+1)	&p>4n	&2n<p<4n\\
Spin(2n)	&p>4n-4	&2n-2<p<4n-4\\
G_2			&p=5,p>11	&p=7,11\\
F_4,E_6		&p>23		&11\leq p\leq23\\
E_7			&p>31		&17\leq p\leq31\\
E_8			&p>59		&23\leq p\leq59\\
\hline
\end{array}
\]
\end{thm}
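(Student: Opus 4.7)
The plan is to apply Theorem~\ref{main1} row by row. In each case the restrictions on $p$ relative to the rank of $G$ place $(G,p)$ inside the low rank list~\eqref{retractile_list}, and the results of Kaji--Kishimoto, Kishimoto and Theriault cited before Corollary~\ref{cor_intro} ensure that the homotopy nilpotence class of $G$ at $p$ is at most~$3$, which is strictly less than $p+1$ for every odd prime~$p$ that appears. Thus Theorem~\ref{main1} applies and reduces the problem to computing the order of $\sm{\imath,\imath}\colon A\wedge A\to G$ in each entry of the table.

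For the $p$-regular rows, $A$ is the wedge $\bigvee_{i=1}^l S^{2n_i-1}$, so $A\wedge A$ splits as a wedge of spheres $S^{2n_i+2n_j-2}$ and $\sm{\imath,\imath}$ restricted to each wedge summand is a Samelson product $\sm{\imath_i,\imath_j}$ of sphere inclusions. The order of $\sm{\imath,\imath}$ is the maximum of the orders of these components in $\pi_{2n_i+2n_j-2}(G)_{(p)}$. In the $r=0$ column, which collects the largest primes, the dimensions $2n_i+2n_j-2$ fall in ranges where the relevant $p$-local homotopy groups of the factors $B_i$ in the mod $p$ product decomposition $G\simeq\prod B_i$ vanish, forcing every component to be null. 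In the $r=1$ column these dimensions are small enough that some component survives; I would obtain an upper bound of $p$ from the $p$-local structure of $\pi_*(G)$ and a matching lower bound by detecting the offending component either through a higher cohomology operation on $H^*(G;\mathbb{F}_p)$ or by projecting onto a sphere factor and identifying the product with a known generator of a $p$-local homotopy group of a sphere.

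For the quasi-$p$-regular cases that appear in the table (most visibly $G_2$ at $p=5$), $A$ has a summand of the form $B(2n-1,2n+2p-3)$, and I would extend the wedge-of-spheres argument by running it through the defining cofibration $S^{2n-1}\to B(2n-1,2n+2p-3)\to S^{2n+2p-3}$. Each piece of $\sm{\imath,\imath}$ then reduces to a sphere-level computation plus an obstruction controlled by the attaching map $\tfrac{1}{2}\alpha$, whose order at $p$ dictates whether an extra contribution appears.

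The main obstacle will be the sharp lower bound for the exceptional groups, particularly $E_7$ and $E_8$ at the smaller primes in the $r=1$ column, where the relevant $p$-local homotopy groups are not as transparent as for the classical groups. For these, I would combine Mimura--Toda style tables of $\pi_*(G)$ at the relevant primes with the observation that in the $p$-local decomposition $G\simeq\prod B_i$ each factor contributes a distinguished Samelson product detected by a specific higher cohomology operation on $H^*(G;\mathbb{F}_p)$, and then verify that this operation is nonzero in the required range to certify that the order is exactly $p$.
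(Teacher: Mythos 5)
Your reduction to $\sm{\imath,\imath}$ via Theorem~\ref{main1} and the computation of upper bounds through the $p$-local homotopy groups of the factors $B_k$ is the same skeleton as the paper's argument, and it does yield the upper bound $p$ in every entry (in the $p$-regular range $p\geq n_l$, Theorem~\ref{toda} shows each component of $\sm{\imath,\imath}$ lies in a group $\pi_{2n_i+2n_j-2}(S^{2n_k-1})$ of exponent at most $p$). The genuine gap is the lower bound in the $r=1$ column. You propose to certify that the order is exactly $p$ by detecting a nonzero component via a higher cohomology operation or by identifying it with a known generator of a $p$-local homotopy group of a sphere, but you never name which component, which operation, or which generator; carrying this out for every classical family and for $E_7$, $E_8$ at each listed prime would amount to reproving a known theorem. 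The paper instead quotes McGibbon's classification \cite{mcgibbon84}: a $p$-localized simple Lie group of type $(2n_1-1,\dots,2n_l-1)$ is homotopy commutative if and only if $p>2n_l$, except for $(Sp(2),3)$ and $(G_2,5)$. This single citation settles the dichotomy between the two columns in one stroke: for $n_l\leq p<2n_l$ the group is not homotopy commutative, so $\sm{\one_G,\one_G}$ is nontrivial and, having order dividing $p$, has order exactly $p$. Without McGibbon or an equivalent input, your argument only proves the upper bounds.

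A second, smaller problem is the entry $(G_2,p=5)$. Here $G_2$ is quasi-$5$-regular but not $5$-regular, and none of the nilpotency results you invoke applies ($p$-regularity fails for Kaji--Kishimoto, while Kishimoto and Theriault require $G=SU(n)$ or $p\geq7$), so Theorem~\ref{main1} is not available by your reasoning; this entry is precisely one of McGibbon's exceptional homotopy commutative cases and is read off from his theorem directly. Relatedly, invoking Theorem~\ref{main1} with $r=0$ is circular, since its hypothesis for $r=0$ is that $G$ already has nilpotence class less than $2$; to derive the $r=0$ column from vanishing of $\sm{\imath,\imath}$ you would have to apply the $r=1$ version to see that the order divides $p$ and is not $p$, or, as the paper does, simply quote McGibbon. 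Finally, a small correction: in the quasi-$p$-regular case the wedge summands of $A$ are the two-cell complexes $C_{2n_i-1}$ (cofibres of $\alpha_{2n_i-1}$), not the bundles $B(2n_i-1,2n_i+2p-3)$, which are the corresponding factors of $G$.
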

\noindent
For many other quasi-$p$-regular cases, we give rough bounds on the order of $\sm{\one_G,\one_G}$ by bounding the order of $\sm{\imath,\imath}$.

Here is the structure of this paper. In Section 2 we prove Theorem~\ref{main1} assuming Lemma~\ref{lemma_sm_mk_null}, whose proof is given in Section 3 because of its length. Section 3 is divided into two parts. In the first part we consider the algebraic properties of Samelson products and in the second part we use algebraic methods to prove Lemma~\ref{lemma_sm_mk_null}. In Section 4 we apply Theorem~\ref{main1} and use other known results to calculate bounds on the order of the Samelson product $\sm{\one_G,\one_G}$ for quasi-$p$-regular Lie groups.

\section{Samelson products of low rank Lie groups}
\begin{dfn}
Let $G$ be a simple, simply-connected, compact Lie group and $p$ be an odd prime. Localized at $p$, a triple $(A, \imath, G)$ is \emph{retractile} if $A$ is a co-H-space and a subspace of $G$ and $\imath:A\hookrightarrow G$ is an inclusion such that
\begin{itemize}
\item	there is an algebra isomorphism $H_*(G)\cong\Lambda(\tilde{H}_*(A))$ of homologies with mod-$p$ coefficients;
\item	the induced homomorphism $\imath_*: H_*(A)\to H_*(G)$ is an inclusion of the generating set;
\item	the suspension $\Sigma\imath:\Sigma A\to\Sigma G$ has a left homotopy inverse $t:\Sigma G\to\Sigma A$.
\end{itemize}
We also refer to $G$ as being retractile for short.
\end{dfn}

From now on, we take $p$-localization and assume $G$ and $p$ satisfy~(\ref{retractile_list}). According to \cite{theriault07},~$G$ is retractile. First we want to establish a connection between $G$ and $\Omega\Sigma A$. Consider the homotopy commutative diagram
\begin{equation}\label{CD1}
\xymatrix{
A\ar[dr]_{\Sigma}\ar[rr]^{\imath}	&&G\\
&\Omega\Sigma A\ar[ur]_{\tilde{\imath}}	&
}
\end{equation}
where $\Sigma:A\to\Omega\Sigma A$ is the suspension and $\tilde{\imath}:\Omega\Sigma A\to G$ is an H-map. Since $G$ is retractile, the suspension $\Sigma\imath:\Sigma A\to\Sigma G$ has a left homotopy inverse $t:\Sigma G\to\Sigma A$. Let $s$ be the composition
\[
s:G\overset{\Sigma}{\longrightarrow}\Omega\Sigma G\overset{\Omega t}{\longrightarrow}\Omega\Sigma A.
\]

\begin{lemma}\label{lemma_rho+s}
The map $\tilde{\imath}\circ s$ is a homotopy equivalence.
\end{lemma}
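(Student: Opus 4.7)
The plan is to verify that $\tilde{\imath}\circ s\colon G\to G$ induces an isomorphism on mod-$p$ homology and then apply the mod-$p$ Whitehead theorem. Write $\phi:=(\tilde{\imath}\circ s)_*$ acting on $H_*(G;\mathbb{F}_p)\cong \Lambda(V_A)$, where $V_A:=\imath_*\tilde H_*(A;\mathbb{F}_p)$. Since $p$ is odd and the retractile hypothesis places every algebra generator of $H_*(G;\mathbb{F}_p)$ in an odd degree, $\Lambda(V_A)$ is a genuine exterior Hopf algebra and its space of primitives is exactly $V_A$.

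First, I would establish $\tilde{\imath}\circ s\circ \imath\simeq \imath$. By naturality of the unit $\Sigma\colon X\to \Omega\Sigma X$ applied to $\imath\colon A\to G$, one has $\Sigma\circ \imath = \Omega\Sigma\imath\circ \Sigma$, and therefore
\[
s\circ \imath \;=\; \Omega t\circ \Sigma\circ \imath \;=\; \Omega(t\circ \Sigma\imath)\circ \Sigma \;\simeq\; \Omega(1_{\Sigma A})\circ \Sigma \;=\; \Sigma,
\]
using the retractile identity $t\circ \Sigma\imath\simeq 1_{\Sigma A}$. Postcomposing with $\tilde{\imath}$ and invoking diagram~(\ref{CD1}) yields $\tilde{\imath}\circ s\circ \imath \simeq \tilde{\imath}\circ \Sigma = \imath$. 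Consequently $\phi$ restricts to the identity on $V_A\subset H_*(G;\mathbb{F}_p)$.

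Next, I would observe that $\phi$ is a graded coalgebra endomorphism of $H_*(G;\mathbb{F}_p)$ (any continuous map induces one on reduced homology, and the counit is preserved automatically), and then prove $\phi$ is injective by induction on degree. Suppose for contradiction that $x$ is a nonzero class of minimal positive degree with $\phi(x)=0$. Decompose $\Delta(x)=1\otimes x + x\otimes 1 + \tilde\Delta(x)$ with $\tilde\Delta(x)\in \bigoplus_{0<i<|x|} H_i(G;\mathbb{F}_p)\otimes H_{|x|-i}(G;\mathbb{F}_p)$. The coalgebra identity $\Delta\phi = (\phi\otimes\phi)\Delta$ collapses to $(\phi\otimes\phi)\tilde\Delta(x)=0$. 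By the minimality assumption, $\phi$ is injective on every $H_i(G;\mathbb{F}_p)$ appearing, so over the field $\mathbb{F}_p$ each restriction $\phi\otimes\phi$ is injective; hence $\tilde\Delta(x)=0$, so $x$ is primitive. But primitives lie in $V_A$ and $\phi|_{V_A}=\mathrm{id}$, so $x=\phi(x)=0$, a contradiction. Finite-dimensionality of $H_*(G;\mathbb{F}_p)$ then upgrades injectivity to an isomorphism.

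Finally, $\tilde{\imath}\circ s$ is a mod-$p$ homology isomorphism between simply-connected $p$-local spaces, so the $p$-local Whitehead theorem delivers the homotopy equivalence. The main obstacle is the coalgebra induction, and in particular the identification of the primitives of $\Lambda(V_A)$ with $V_A$ itself; this rests on $p$ being odd, which forces all Hopf-algebra generators into odd degrees and prevents spurious primitives from appearing among their products.
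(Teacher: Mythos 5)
Your first half is exactly the paper's argument: you establish $s\circ\imath\simeq\Sigma$ from naturality of the unit and the retractile identity $t\circ\Sigma\imath\simeq\one_{\Sigma A}$, and conclude $\tilde{\imath}\circ s\circ\imath\simeq\imath$. The paper then dualizes to cohomology and argues with the \emph{algebra} structure: $e^*=(\tilde{\imath}\circ s)^*$ is automatically an algebra endomorphism of $H^*(G;\mathbb{F}_p)$, the relation $\imath^*\circ e^*=\imath^*$ together with the fact that $\imath^*$ induces an isomorphism onto $\tilde{H}^*(A)$ (killing exactly the decomposables, since $A$ is a co-H-space) forces $e^*$ to be the identity on indecomposables, hence surjective, hence an isomorphism by finite-dimensionality. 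You instead run the dual \emph{coalgebra} argument in homology, and that dualization introduces a genuine gap.

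The gap is the claim that the primitives of $H_*(G;\mathbb{F}_p)$ are exactly $V_A=\imath_*\tilde{H}_*(A)$. The retractile hypothesis is purely algebra-level: it gives an isomorphism $H_*(G)\cong\Lambda(\tilde{H}_*(A))$ of \emph{algebras} and says nothing about where the coproduct sends the chosen generators. What is true for a connected commutative Hopf algebra over $\mathbb{F}_p$ ($p$ odd) with odd-degree generators is only that $P\to Q$ is an isomorphism, so a primitive of degree $d$ agrees with an element of $V_A$ \emph{modulo decomposables}; the subspaces $P$ and $V_A$ need not coincide, and nothing in the hypotheses rules out a primitive of the form $v+D$ with $v\in V_A$ and $D$ decomposable. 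Your induction then breaks at the last step: for such an $x=v+D$ in the kernel you would need to control $\phi(D)$, but $\phi=e_*$ is \emph{not} an algebra map (the factor $G\to\Omega\Sigma G$ of $s$ is not an H-map), so you cannot argue that $\phi$ preserves decomposables or that $\phi(x)=x$. Everything else in your write-up is sound — the reduction of injectivity of a coalgebra map to injectivity on primitives via the minimal-degree kernel element is correct, as is the final appeal to the $p$-local Whitehead theorem — but the identification $P=V_A$ is exactly the unavailable input. The fix is to work in cohomology as the paper does, where the needed input (identity on indecomposables for an algebra map) is precisely what $e\circ\imath\simeq\imath$ and the retractile structure provide.
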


\begin{proof}
Denote the composition $\tilde{\imath}\circ s$ by $e$ for convenience. Consider the commutative diagram
\[\xymatrix{
A\ar[d]_{\Sigma}\ar[r]^{\imath}				&G\ar[d]_{\Sigma}\ar[dr]^{s}		&\\
\Omega\Sigma A\ar[r]^{\Omega\Sigma\imath}	&\Omega\Sigma G\ar[r]^{\Omega t}	&\Omega\Sigma A.
}\]
The commutativity of the left square is due to the naturality of the suspension map, and the commutativity of the right triangle follows from the definition of $s$. The bottom row is homotopic to the identity since $t$ is a left homotopy inverse for $\Sigma\imath$. Hence we have $s\circ\imath\simeq\Sigma$ and consequently
\[
e\circ\imath=\tilde{\imath}\circ s\circ\imath\simeq\tilde{\imath}\circ\Sigma.
\]
By Diagram~(\ref{CD1}) $\tilde{\imath}\circ\Sigma$ is homotopic to $\imath$. This implies that $(e\circ\imath)_*$ sends $H_*(A)$ onto the generating set of $H_*(G)=\Lambda(\tilde{H}_*(A))$ where we consider the mod-$p$ homology. Dually,~\mbox{$(e\circ\imath)^*:H^*(G)\to H^*(A)$} is an epimorphism. The generating set $\imath^*(H^*(A))$ is in $Im(e^*)$. Since $e^*:H^*(G)\to H^*(G)$ is an algebra map, $e^*$ is an epimorphism and hence is an isomorphism. Therefore \mbox{$e:G\to G$} is a homotopy equivalence.
\end{proof}

We claim that the Samelson product
\[
\sm{\tilde{\imath}, \tilde{\imath}}:\Omega\Sigma A\wedge\Omega\Sigma A\overset{\tilde{\imath}\wedge\tilde{\imath}}{\longrightarrow}G\wedge G\overset{\sm{\one_G, \one_G}}{\longrightarrow}G
\]
has the same order as $\sm{\one_G,\one_G}$.

\begin{lemma}\label{rho}
The map $p^r\circ\sm{\one_G, \one_G}$ is null-homotopic if and only if $p^r\circ\sm{\tilde{\imath}, \tilde{\imath}}$ is null-homotopic.
\end{lemma}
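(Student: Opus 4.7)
The plan is to deduce Lemma~\ref{rho} from naturality of the Samelson product together with Lemma~\ref{lemma_rho+s}, which says that $e=\tilde{\imath}\circ s:G\to G$ is a homotopy equivalence.

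First I would record the naturality identity
\[
\sm{f\circ a,g\circ b}=\sm{f,g}\circ(a\wedge b),
\]
which follows immediately from the formula $c(f,g)(x,y)=f(x)^{-1}g(y)^{-1}f(x)g(y)$ for the commutator map, since $c(f\circ a,g\circ b)=c(f,g)\circ(a\times b)$ already at the level of products, and the Samelson product is obtained by descending to the smash. For the ``only if'' direction, I would then apply this identity with $f=g=\one_G$ and $a=b=\tilde{\imath}$ to get $\sm{\tilde{\imath},\tilde{\imath}}=\sm{\one_G,\one_G}\circ(\tilde{\imath}\wedge\tilde{\imath})$, so that composing on the left with the $p^r$-th power map yields $p^r\circ\sm{\tilde{\imath},\tilde{\imath}}\simeq\bigl(p^r\circ\sm{\one_G,\one_G}\bigr)\circ(\tilde{\imath}\wedge\tilde{\imath})$, which is clearly null whenever $p^r\circ\sm{\one_G,\one_G}$ is.

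For the ``if'' direction I would exploit that $e$ is a homotopy equivalence, so $e\wedge e:G\wedge G\to G\wedge G$ is as well. Hence $p^r\circ\sm{\one_G,\one_G}$ is null-homotopic if and only if $p^r\circ\sm{\one_G,\one_G}\circ(e\wedge e)$ is. Naturality applied to $e=\tilde{\imath}\circ s$ gives
\[
\sm{\one_G,\one_G}\circ(e\wedge e)=\sm{e,e}=\sm{\tilde{\imath}\circ s,\tilde{\imath}\circ s}=\sm{\tilde{\imath},\tilde{\imath}}\circ(s\wedge s),
\]
so the assumption $p^r\circ\sm{\tilde{\imath},\tilde{\imath}}\simeq\ast$ forces $p^r\circ\sm{\one_G,\one_G}\circ(e\wedge e)\simeq\ast$, and pre-composing with a homotopy inverse of $e\wedge e$ finishes the argument.

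No serious obstacle is anticipated: once Lemma~\ref{lemma_rho+s} is available, the proof is a short naturality argument, and the only step that genuinely requires care is the naturality identity displayed above, which I would justify directly from the pointwise formula for $c(f,g)$ rather than appeal to any deeper property of Samelson products.
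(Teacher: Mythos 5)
Your proposal is correct and follows essentially the same route as the paper: both directions rest on the naturality identity $\sm{f\circ a,g\circ b}=\sm{f,g}\circ(a\wedge b)$ together with Lemma~\ref{lemma_rho+s}, the only cosmetic difference being that you pre-compose with $e\wedge e$ and then cancel it, whereas the paper substitutes $\one_G\simeq\tilde{\imath}\circ s\circ e'$ directly before applying naturality. No gap.
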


\begin{proof}
The sufficiency part is obvious. We only show the necessity part. Suppose $p^r\circ\sm{\tilde{\imath}, \tilde{\imath}}$ is null-homotopic. By Lemma~\ref{lemma_rho+s}, $e=\tilde{\imath}\circ s$ is a homotopy equivalence. Composing with its inverse $e'$, the map $\tilde{\imath}\circ s\circ e'$ is homotopic to the identity. Then we obtain
\[
p^r\circ\sm{\one_G, \one_G}\simeq p^r\circ\sm{\tilde{\imath}\circ s\circ e', \tilde{\imath}\circ s\circ e'}=p^r\circ\sm{\tilde{\imath},\tilde{\imath}}\circ(s\circ e'\wedge s\circ e')
\]
which is null-homotopic since $p^r\circ\sm{\tilde{\imath},\tilde{\imath}}$ is null-homotopic.
\end{proof}

Combining Diagram~\ref{CD1} and the fact that $\tilde{\imath}$ is an H-map, we have the commutative diagram
\begin{equation}\label{dgm_rho_e_m}
\xymatrix{
									&(\Omega\Sigma A)^k\ar[d]^{\tilde{\imath}^k}\ar[r]^{\mu^k}	&\Omega\Sigma A\ar[d]^{\tilde{\imath}}\\
A^k\ar[ur]^{j^k}\ar[r]^{\imath^k}	&G^k\ar[r]^{m^k}											&G
}
\end{equation}
where $\mu^k$ and $m^k$ are the $k$-fold multiplications in $\Omega\Sigma A$ and $G$. Let $m_k$ and $e_k$ be the compositions
\[
\begin{array}{c c c}
m_k:A^k\overset{\imath^k}{\longrightarrow}G^k\overset{m^k}{\longrightarrow}G
&\text{and}
&e_k:A^k\overset{j^k}{\longrightarrow}(\Omega\Sigma A)^k\overset{\mu^k}{\longrightarrow}\Omega\Sigma A.
\end{array}
\]
Then we have the following commutative diagram
\[\xymatrix{
A^k\wedge A^l\ar[d]_{e_k\wedge e_l}\ar[dr]^{m_k\wedge m_l}						&											&				&\\
\Omega\Sigma A\wedge\Omega\Sigma A\ar[r]^-{\tilde{\imath}\wedge\tilde{\imath}}	&G\wedge G\ar[r]^-{\sm{\one_G,\one_G}}		&G\ar[r]^{p^r}	&G
}\]
Observe that there is a string of equalities
\begin{eqnarray*}
[\Omega\Sigma A\wedge\Omega\Sigma A, G]
&=&[\Sigma\Omega\Sigma  A\wedge\Omega\Sigma A, BG]\\
&=&[\bigvee^\infty_{k,l=1}\Sigma A^{\wedge k}\wedge A^{\wedge l}, BG]\\
&=&\prod^\infty_{k,l=1}[A^{\wedge k}\wedge A^{\wedge l}, G].
\end{eqnarray*}
The first and the third lines are due to adjunction, and the second line is due to James splitting $\Sigma\Omega\Sigma A\simeq\Sigma^{\infty}_{k=1}\Sigma A^{\wedge k}$. It is not hard to see that the nullity of $p^r\circ\sm{\tilde{\imath}, \tilde{\imath}}$ implies the nullity of the components $p^r\circ\sm{m_k,m_l}$. In the following we show that the converse is true.

\begin{lemma}\label{k component of JA}
Let $X$ be a space and let $f:X\to G$ be a map. If $p^r\circ\sm{m_k,f}:A^k\wedge X\to G$ is null-homotopic for all $k$, then $p^r\circ\sm{\tilde{\imath},f}:\Omega\Sigma A\wedge X\to G$ is null-homotopic. Similarly, if $p^r\circ\sm{f,m_l}:X\wedge A^l\to G$ is null-homotopic for all $l$, then $p^r\circ\sm{f,\tilde{\imath}}:X\wedge\Omega\Sigma A\to G$ is null-homotopic.
\end{lemma}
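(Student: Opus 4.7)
The plan is to use the decomposition
\[
[\Omega\Sigma A\wedge X, G]\cong\prod_{k\geq1}[A^{\wedge k}\wedge X, G]
\]
established just above via adjunction and the James splitting. Under this isomorphism, $p^r\circ\sm{\tilde{\imath},f}$ corresponds to a sequence $(\psi_k)_{k\geq 1}$ with $\psi_k\colon A^{\wedge k}\wedge X\to G$, and it suffices to show each $\psi_k$ is null-homotopic. I will induct on $k$, exploiting the central identity
\[
\sm{\tilde{\imath},f}\circ(e_k\wedge\one_X)=\sm{\tilde{\imath}\circ e_k,f}=\sm{m_k,f},
\]
which follows from naturality of the Samelson product in the first variable together with Diagram~(\ref{dgm_rho_e_m}). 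Multiplying by $p^r$ gives $p^r\circ\sm{m_k,f}=p^r\circ\sm{\tilde{\imath},f}\circ(e_k\wedge\one_X)$, which is null-homotopic by hypothesis for every $k$.

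For the base case $k=1$, the map $e_1$ equals $\Sigma\colon A\to\Omega\Sigma A$, whose image is precisely the first James summand, so $\psi_1=p^r\circ\sm{\imath,f}=p^r\circ\sm{m_1,f}$ is null. For the inductive step, assume that $\psi_j$ is null for $1\leq j<k$. Stably $\Sigma A^k\simeq\bigvee_{\emptyset\neq S\subseteq[k]}\Sigma A^{\wedge|S|}$, and the map $\Sigma e_k$ sends each $S$-summand into the $|S|$-th James wedge summand $\Sigma A^{\wedge|S|}\subset\Sigma\Omega\Sigma A$ via the canonical identification; this is a standard property of the James construction together with the fact that $e_k$ factors through $J_k(A)$. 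Applying the decomposition of $[A^k\wedge X,G]$ coming from the stable splitting of $A^k$, the identity $p^r\circ\sm{m_k,f}=p^r\circ\sm{\tilde{\imath},f}\circ(e_k\wedge\one_X)$ decomposes as a sum over $S$ of maps factoring through $p^r\circ\psi_{|S|}$. The inductive hypothesis kills the summands with $|S|<k$, leaving only the contribution from $S=[k]$, which is $p^r\circ\psi_k$ precomposed with the quotient $q\colon A^k\to A^{\wedge k}$. Since $q$ admits a stable section and $p^r\circ\sm{m_k,f}$ is null, it follows that $p^r\circ\psi_k$ is null, completing the induction. The symmetric statement is proven identically, using naturality of the Samelson product in the second variable.

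The main obstacle is the compatibility between $e_k$, the stable splitting of $A^k$, and the James splitting of $\Omega\Sigma A$: one must trace through how the $k$-fold multiplication built into $e_k$ interacts with both wedge decompositions. Restricting at each stage to the top summand $S=[k]$ and invoking the inductive hypothesis for all lower $|S|$ is what keeps the argument free of delicate sign and cross-term bookkeeping.
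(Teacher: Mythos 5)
Your overall strategy --- pass to the decomposition of $[\Omega\Sigma A\wedge X,G]$ coming from adjunction and the James splitting, and use the identity $\sm{\tilde{\imath},f}\circ(e_k\wedge\one_X)\simeq\sm{m_k,f}$ to access the $k$-th component --- is the paper's strategy. But the one step you assert rather than prove is the crux: the claim that $\Sigma e_k$ carries each summand $\Sigma A^{\wedge|S|}$ of the stable splitting of $\Sigma A^k$ to the $|S|$-th summand of the \emph{canonical} James splitting ``via the canonical identification.'' What the factorization of $e_k$ through $J_k(A)$ gives you for free is that the components of $\Sigma e_k$ into the summands $\Sigma A^{\wedge m}$ with $m>k$ vanish and that the top component is the suspended quotient; it does not identify the restriction of $\Sigma e_k$ to an $S$-summand with the canonical James inclusion. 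A section $\Sigma A^{\wedge |S|}\to\Sigma A^{|S|}$ is only pinned down on homology, and the composite of $\Sigma e_k$ with such a section agrees with the canonical James inclusion on homology without any reason to be homotopic to it. If the two differ, then writing $p^r\circ\sm{m_k,f}$ as ``a sum over $S$ of maps factoring through $p^r\circ\psi_{|S|}$'' (with $\psi_m$ the components relative to the canonical splitting) acquires genuine cross terms: a map from a co-H-space into a wedge is not determined by its components into the factors, so the bookkeeping you defer at the end is not a matter of signs but the entire content of the step. Note also that your induction is doing no work: since $h\circ(\Sigma e_k\wedge\one_X)$ is null (being adjoint to $p^r\circ\sm{m_k,f}$), its restriction to \emph{every} summand of the stable splitting of $\Sigma A^k$ is null, in particular the top one; you never need the lower summands to have been handled first.

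The paper closes exactly this gap by refusing to compare with the canonical James splitting at all. It sets $\Psi_k=\Sigma e_k\circ\sigma_k$, where $\sigma_k$ is any chosen right homotopy inverse of the suspended quotient $\Sigma A^k\to\Sigma A^{\wedge k}$, and checks by a direct homology computation --- using that $(e_k)_*$ maps the submodule of length-$k$ tensors in $H_*(A)^{\otimes k}$ onto the length-$k$ part of $T(\tilde{H}_*(A))$ --- that $\Psi=\bigvee_k\Psi_k$ is itself a homotopy equivalence. Then $h\circ(\Psi\wedge\one_X)$ is null summand by summand because each $h\circ(\Sigma e_k\wedge\one_X)$ is null, and $h$ is null because $\Psi\wedge\one_X$ is an equivalence. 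This uses only the top summand for each $k$, needs no induction, and requires no compatibility statement between the two splittings. Replacing your appeal to ``the canonical identification'' with this construction of a bespoke splitting turns your outline into the paper's proof.
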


\begin{proof}
We only prove the first statement since the second statement can be proved similarly. Let $h:\Sigma\Omega\Sigma A\wedge X\to BG$ be the adjoint of $p^r\circ\sm{\tilde{\imath},f}$. It suffices to show that $h$ is null-homotopic.

For any $k$, choose a right homotopy inverse $\psi_k$ of the suspended quotient map~\mbox{$\Sigma A^k\to\Sigma A^{\wedge k}$}, and let $\Psi_k$ be the composition
\[
\Psi_k:\Sigma A^{\wedge k}\overset{\psi_k}{\longrightarrow}\Sigma A^k\overset{\Sigma e_k}{\longrightarrow}\Sigma\Omega\Sigma A.
\]
Observe that $e_k$ is the product of $k$ copies of the suspension $j$ and $j_*:H_*(A)\to H_*(\Omega\Sigma A)$ is the inclusion of the generating set into $H_*(\Omega\Sigma A)\cong T(\tilde{H}_*(A))$. The map~\mbox{$(e_k)_*:H_*(A^k)\to H^*(\Omega\Sigma A)$} sends the submodule $S_k\subset H_*(A^k)\cong H_*(A)^{\otimes k}$ consisting of length $k$ tensor products onto the submodule $M_k\subset T(\tilde{H}_*(A))$ consisting of length $k$ tensor products. Therefore $(\Psi_k)_*$ does the same. Then their sum
\[
\Psi=\bigvee_{k=1}^{\infty}\Psi_k:\bigvee_{k=1}^{\infty}\Sigma A^{\wedge k}\to\Sigma\Omega\Sigma A
\]
induces a homology isomorphism and hence is a homotopy equivalence.

We claim that $h\circ(\Psi_k\wedge\one_X)$ is null-homotopic for all $k$, where $\one_X$ is the identity of $X$. Observe that the adjoint of the composition
\[
\Sigma A^k\wedge X\overset{\Sigma e_k\wedge\one_X}{\longrightarrow}\Sigma\Omega\Sigma A\wedge X\overset{h}{\longrightarrow}BG
\]
is $p^r\circ\sm{\tilde{\imath},f}\circ(e_k\wedge\one_X)\simeq p^r\circ\sm{\tilde{\imath}\circ e_k,f}\simeq p^r\circ\sm{m_k,f}$ which is null-homotopic by assumption. Therefore $h\circ(\Psi_k\wedge\one_X)=h\circ(\Sigma e_k\wedge\one_X)\circ(\psi_k\wedge\one_X)$ is null-homotopic, and by definition of~$\Psi$, the composition
\[
\bigvee^{\infty}_{k=1}\Sigma A^{\wedge k}\wedge X\overset{\Psi\wedge\one}{\longrightarrow}\Sigma\Omega\Sigma A\wedge X\overset{h}{\longrightarrow}BG.
\]
is null-homotopic. Notice that $(\Psi\wedge\one)$ is a homotopy equivalence. It implies that $h$ is null-homotopic and so is $p^r\circ\sm{\tilde{\imath},f}$.
\end{proof}

\begin{lemma}\label{lemma_J_to_JkJl}
The map $p^r\circ\sm{\tilde{\imath}, \tilde{\imath}}$ is null-homotopic if and only if $p^r\circ\sm{m_k, m_l}$ is null-homotopic for all $k$ and $l$.
\end{lemma}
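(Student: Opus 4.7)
The plan is to obtain both directions directly from Lemma~\ref{k component of JA} together with the commutative diagram~(\ref{dgm_rho_e_m}). The forward implication should be essentially formal, while the reverse implication requires a double application of Lemma~\ref{k component of JA}, once in each variable of the Samelson product.

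For the ``only if'' direction, I would observe that Diagram~(\ref{dgm_rho_e_m}) gives $\tilde{\imath}\circ e_k\simeq m_k$, and hence
\[
p^r\circ\sm{m_k,m_l}\simeq p^r\circ\sm{\tilde{\imath}\circ e_k,\tilde{\imath}\circ e_l}=p^r\circ\sm{\tilde{\imath},\tilde{\imath}}\circ(e_k\wedge e_l).
\]
So if the right-hand factor $p^r\circ\sm{\tilde{\imath},\tilde{\imath}}$ is null-homotopic, so is every $p^r\circ\sm{m_k,m_l}$.

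For the ``if'' direction, suppose $p^r\circ\sm{m_k,m_l}$ is null-homotopic for all $k,l$. I would first fix $l$ and set $f=m_l:A^l\to G$, $X=A^l$. The hypothesis then says $p^r\circ\sm{m_k,f}$ is null-homotopic for every $k$, so the first half of Lemma~\ref{k component of JA} gives that $p^r\circ\sm{\tilde{\imath},m_l}:\Omega\Sigma A\wedge A^l\to G$ is null-homotopic for every $l$. Now set $f=\tilde{\imath}:\Omega\Sigma A\to G$ and $X=\Omega\Sigma A$; the second half of Lemma~\ref{k component of JA} applies, since $p^r\circ\sm{f,m_l}=p^r\circ\sm{\tilde{\imath},m_l}$ has just been shown to be null-homotopic for all $l$. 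This yields that $p^r\circ\sm{\tilde{\imath},\tilde{\imath}}=p^r\circ\sm{f,\tilde{\imath}}$ is null-homotopic, as desired.

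There is no real obstacle here beyond keeping the variables straight: the reverse direction is essentially an iterated use of Lemma~\ref{k component of JA}, first to promote the left entry from $m_k$ to $\tilde{\imath}$, and then to promote the right entry from $m_l$ to $\tilde{\imath}$. The forward direction is the naturality of Samelson products applied to the factorisation $\tilde{\imath}\circ e_k\simeq m_k$ coming from Diagram~(\ref{dgm_rho_e_m}).
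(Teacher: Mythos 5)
Your proposal is correct and follows exactly the paper's own argument: the forward direction is the formal naturality statement $p^r\circ\sm{m_k,m_l}\simeq p^r\circ\sm{\tilde{\imath},\tilde{\imath}}\circ(e_k\wedge e_l)$, and the reverse direction applies the first part of Lemma~\ref{k component of JA} with $f=m_l$ to promote the left entry, then the second part with $f=\tilde{\imath}$ to promote the right entry. No gaps.
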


\begin{proof}
It suffices to prove the necessity part. Suppose $p^r\circ\sm{m_k,m_l}$ is null-homotopic for all~$k$ and $l$. Apply the first part of Lemma~\ref{k component of JA} to obtain $p^r\circ\sm{\tilde{\imath},m_l}\simeq*$ for all $l$ and apply the second part of Lemma~\ref{k component of JA} to obtain $p^r\circ\sm{\tilde{\imath},\tilde{\imath}}\simeq*$.
\end{proof}

At this point we have related the order of $\sm{\one_G,\one_G}$ to the orders of $\sm{m_k,m_l}$ for all $k$ and~$l$. There is one more step to link up with the order of the Samelson product
\[
\sm{\imath,\imath}:A\wedge A\overset{\imath\wedge\imath}{\longrightarrow}G\wedge G\overset{\sm{\one_G,\one_G}}{\longrightarrow}G\overset{p^r}{\longrightarrow}G.
\]

\begin{lemma}\label{lemma_sm_mk_null}
If $p^r\circ\sm{\imath,\imath}$ is null-homotopic and $G$ has homotopy nilpotency class less than~\mbox{$p^r+1$}, then $p^r\circ\sm{m_k, m_l}$ is null-homotopic for all $k$ and $l$.
\end{lemma}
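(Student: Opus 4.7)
My plan is to expand $\sm{m_k, m_l}$ into a product of iterated Samelson brackets of copies of $\imath$, discard the high-depth summands using the homotopy nilpotency bound, and apply $p^r\circ\sm{\imath,\imath}\simeq *$ to kill the rest.

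Write $m_k$ as the pointwise H-space product $(\imath\circ\pi_1)\cdots(\imath\circ\pi_k)$ in $[A^k, G]$, where $\pi_i:A^k\to A$ is the $i$-th projection, and similarly for $m_l$. The first ingredient is a Samelson-product analogue of the group commutator identity $[ab,c]=[a,c]^b\cdot[b,c]$: for $f_1,f_2:X\to G$ and $g:Y\to G$,
\[
\sm{f_1\cdot f_2,\, g}\;\simeq\;\sm{f_1,g}\cdot\sm{\sm{f_1,g},\,f_2}\cdot\sm{f_2,g}
\]
(suitably interpreted via the diagonal on $X$), together with its mirror in the second slot. These are the algebraic identities promised in the first part of Section~3; they follow by composing the corresponding group identity on $G$ with a diagonal on the source.

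Iterating the identity in both slots expresses $\sm{m_k,m_l}$ in $[A^k\wedge A^l, G]$ as a product of iterated Samelson brackets built solely from copies of $\imath$; a bracket of depth $d$ (i.e.\ with $d$ levels of $\sm{-,-}$-nesting) is a map $A^{\wedge(d+1)}\to G$. The homotopy nilpotency hypothesis gives $c_{p^r}\simeq *$, so every iterated commutator involving $p^r+1$ or more copies of $\imath$ vanishes. What remains is a finite product of brackets each involving at most $p^r$ copies of $\imath$, and every such surviving bracket contains $\sm{\imath,\imath}$ as its innermost factor.

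Applying the $p^r$-th power map on $G$, I would use bilinearity of the Samelson product modulo brackets of depth $\geq p^r$ (which we have just shown are null) to push the $p^r$-th power down onto the innermost $\sm{\imath,\imath}$. Since $p^r\circ\sm{\imath,\imath}\simeq *$ by assumption, each surviving bracket is killed, and therefore $p^r\circ\sm{m_k,m_l}\simeq *$. The main obstacle is the algebraic bookkeeping: each iterated application of the commutator identity produces correction terms with Hall--Petresco-type binomial-coefficient multiplicities, and one must verify that each of these either increases bracket depth past the vanishing range or is absorbed into a factor killed by $p^r$-divisibility. This is the bulk of the algebraic work promised in Section~3.
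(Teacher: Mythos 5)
Your outline matches the paper's strategy in broad strokes --- expand the commutator, kill deep brackets by nilpotency, and use $p$-divisibility of binomial coefficients for the rest --- but the part you defer as ``algebraic bookkeeping'' is the entire content of the paper's Section~3, and the one-shot expansion you propose does not obviously close up. Two concrete problems. First, ``bilinearity modulo brackets of depth $\geq p^r$'' is not strong enough to ``push the $p^r$-th power onto the innermost $\sm{\imath,\imath}$'': to convert $w^{N}$ with $p^r\mid N$ into a null map you need \emph{exact} linearity of the bracket in one slot, e.g.\ $c_j(\beta^{p^r},\alpha,\dots,\alpha)=c_j(\beta,\alpha,\dots,\alpha)^{p^r}$, together with exact commutativity of the factors being collected. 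The paper gets this from Lemma~\ref{lemma_co-H_id}, which requires the relevant slot to be a single co-H-space factor $A_{k+1}$ and the maps to restrict trivially to the wedge; nilpotency alone gives only congruences modulo further correction terms, which you would then have to control all over again. Second, collecting the $p^r$-th power of a \emph{long} product of non-commuting brackets produces Hall--Petresco multiplicities $\binom{p^r}{j}$ for all $2\leq j\leq p^r$, and these are not all divisible by $p^r$ (for instance $\binom{p^r}{p^r}=1$, and the $p$-adic valuation of $\binom{p^r}{j}$ is $r-v_p(j)$, which drops below $r$ when $p\mid j$); nor do the corresponding correction terms all have depth $\geq p^r$. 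So ``absorbed into a factor killed by $p^r$-divisibility'' is precisely the claim that needs proof, and for the expansion as you have set it up it fails as stated once $r\geq2$.

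The paper avoids both problems by a double induction on $(k,l)$: it peels off one co-H factor at a time, writing $\cm{m_{k+1},m_l}=\alpha_k\cdot\beta_k$ with $\alpha_k=\cm{m'_k,m_l}$ and with $\beta_k$ restricting trivially to $A_{k+1}\vee(A^k\times A^l)$, so that every collection step is a \emph{two}-factor collection (Lemma~\ref{alpha_k+1^n}), the only exponents that appear are $\binom{p^r}{j+1}$ (Lemma~\ref{prod_express}), and the top term $j=p^r-1$ is killed outright by nilpotency because $\beta_k$ already has commutator weight at least $2$ (Lemma~\ref{nil_omega}). It also uses one genuinely topological input your sketch omits: $\beta_k^{p^r}$ is shown null using $p^r\circ\sm{\one_G,\imath}\simeq *$ (Lemmas~\ref{A wedge G} and~\ref{alpha_beta}), which is derived from $p^r\circ\sm{\imath,\imath}\simeq*$ via the retraction of $\Sigma A$ off $\Sigma G$ and the James splitting, not by formal commutator algebra. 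To turn your plan into a proof you would essentially have to reproduce this inductive structure.
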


The proof of Lemma~\ref{lemma_sm_mk_null} is long and we postpone it to the next section so as to avoid interrupting the flow of our discussion. Assuming Lemma~\ref{lemma_sm_mk_null} we can prove our main theorem.

\begin{thm}\label{main}
Suppose that $G$ has homotopy nilpotence class less than $p^r+1$ after localization at $p$. Then $\sm{\one_G,\one_G}$ has order $p^r$ if and only if $\sm{\imath,\imath}$ has order $p^r$.
\end{thm}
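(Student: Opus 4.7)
The plan is to chain the three preceding lemmas together, trading nullity of $p^r\circ\sm{\imath,\imath}$ for nullity of $p^r\circ\sm{\one_G,\one_G}$, and then to exploit the factorization $\sm{\imath,\imath}=\sm{\one_G,\one_G}\circ(\imath\wedge\imath)$ to pin the two orders down to the same value. Throughout I work $p$-locally, so every order in sight is automatically a power of $p$.

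I would first handle the implication $\ord(\sm{\imath,\imath})=p^r \Rightarrow \ord(\sm{\one_G,\one_G})=p^r$, which is the clean one. From $p^r\circ\sm{\imath,\imath}\simeq *$, Lemma~\ref{lemma_sm_mk_null} applies --- its nilpotency hypothesis is precisely the one inherited from the theorem --- and yields $p^r\circ\sm{m_k,m_l}\simeq *$ for every pair $k,l\geq 1$. Lemma~\ref{lemma_J_to_JkJl} then reassembles these nullities into the single statement $p^r\circ\sm{\tilde{\imath},\tilde{\imath}}\simeq *$, and Lemma~\ref{rho} transfers nullity back to the universal Samelson product, giving $p^r\circ\sm{\one_G,\one_G}\simeq *$; hence $\ord(\sm{\one_G,\one_G})\leq p^r$. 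If this inequality were strict, then $p^{r-1}\circ\sm{\one_G,\one_G}\simeq *$, and precomposing with $\imath\wedge\imath$ would give $p^{r-1}\circ\sm{\imath,\imath}\simeq *$, contradicting $\ord(\sm{\imath,\imath})=p^r$, so equality holds.

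The reverse implication $\ord(\sm{\one_G,\one_G})=p^r \Rightarrow \ord(\sm{\imath,\imath})=p^r$ runs the same circle backwards. The factorization makes $\ord(\sm{\imath,\imath})\leq p^r$ immediate. A hypothetical strict inequality $\ord(\sm{\imath,\imath})=p^s$ with $s<r$ would let the same chain of lemmas run at exponent $s$ to produce $p^s\circ\sm{\one_G,\one_G}\simeq *$, contradicting $\ord(\sm{\one_G,\one_G})=p^r$.

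All the substantive input is packaged into Lemma~\ref{lemma_sm_mk_null}; the remainder of the argument is a short diagram chase combined with a brief comparison of orders. I therefore expect the genuine difficulty of the theorem to live not in the present assembly step but in the postponed proof of Lemma~\ref{lemma_sm_mk_null}, which is the subject of the next section.
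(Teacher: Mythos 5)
Your first paragraph is correct and is, in substance, exactly the paper's proof: from $p^r\circ\sm{\imath,\imath}\simeq *$ you chain Lemma~\ref{lemma_sm_mk_null}, Lemma~\ref{lemma_J_to_JkJl} and Lemma~\ref{rho} (in that order of application) to get $p^r\circ\sm{\one_G,\one_G}\simeq *$, and your lower bound via precomposition with $\imath\wedge\imath$ is just the paper's opening remark that the order of $\sm{\one_G,\one_G}$ is not less than the order of $\sm{\imath,\imath}$. So the direction ``$\sm{\imath,\imath}$ has order $p^r$ implies $\sm{\one_G,\one_G}$ has order $p^r$'' is fine.

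The gap is in your converse. To rule out $\ord(\sm{\imath,\imath})=p^s$ with $s<r$ you propose to ``run the same chain of lemmas at exponent $s$,'' but Lemma~\ref{lemma_sm_mk_null} at exponent $s$ requires the homotopy nilpotence class of $G$ to be less than $p^s+1$, which is a strictly stronger hypothesis than the given bound $p^r+1$ and does not follow from it. Concretely, the nilpotency enters through Lemma~\ref{nil_omega}, which is used to kill the iterated commutator $c_{p^s-1}(\beta_k,\alpha_k,\cdots,\alpha_k)$; this decomposes into commutators of length $p^s+1$ and $p^s+2$, and $nil(G)\leq p^r$ says nothing about commutators of that shorter length. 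So that step fails as written. You should be aware that the paper's own proof does not do better here: it only establishes the inequality $\ord(\sm{\one_G,\one_G})\geq\ord(\sm{\imath,\imath})$ together with, under the assumption $\ord(\sm{\imath,\imath})=p^r$, the reverse inequality $\ord(\sm{\one_G,\one_G})\leq p^r$ --- which is precisely the ``if'' direction --- and leaves the case $\ord(\sm{\imath,\imath})<p^r$ of the ``only if'' direction unaddressed (it is never used in the applications, which always argue from the order of $\sm{\imath,\imath}$ to the order of $\sm{\one_G,\one_G}$). Your instinct that the converse requires a separate argument is right, but the argument you supply invokes Lemma~\ref{lemma_sm_mk_null} outside its hypotheses; as it stands, only the ``if'' half of the biconditional is proved.
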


\begin{proof}
The order of $\sm{\one_G, \one_G}$ is not less than the order of $\sm{\imath,\imath}$. Therefore we need to show that the order of $\sm{\one_G, \one_G}$ is not greater than the order of $\sm{\imath,\imath}$ under the assumption. Assume~$\sm{\imath,\imath}$ has order $p^r$, that is $p^r\circ\sm{\imath,\imath}$ is null-homotopic. Lemmas~\ref{rho}, \ref{lemma_J_to_JkJl} and \ref{lemma_sm_mk_null} imply that $p^r\circ\sm{\one_G, \one_G}$ is null-homotopic, so the order of $\sm{\one_G,\one_G}$ is not greater than the order of $\sm{\imath,\imath}$.
\end{proof}

\section{Proof of Lemma~\ref{lemma_sm_mk_null}}
In this section we prove Lemma~\ref{lemma_sm_mk_null} by showing $p^r\circ\sm{m_k,m_l}$ is null-homotopic assuming the homotopy nilpotence class of $G$ is less than $p^r+1$. We convert it into an algebraic problem and derive lemmas from group theoretic identities and the topological properties of~$G$. First let us review the algebraic properties of Samelson products.

\subsection{Algebraic properties of Samelson products}
Given two maps $f:X\to G$ and~\mbox{$g:Y\to G$}, their Samelson product $\sm{f,g}$ sends $(x, y)\in X\wedge Y$ to the commutator of their images $f(x)$ and $g(y)$. It is natural to regard the map $\sm{f,g}$ as a commutator in $[X\wedge Y, G]$, but $f$ and~$g$ are maps in different homotopy sets and there is no direct multiplication between them. Instead, we can include $[X,G],[Y,G]$ and $[X\wedge Y, G]$ into $[X\times Y, G]$ and identify $\sm{f,g}$ as a commutator there.

\begin{lemma}\label{subgp}
For any spaces $X$ and $Y$, let $\pi_1:X\times Y\to X$ and $\pi_2:X\times Y\to Y$ be the projections and let $q:X\times Y\to X\wedge Y$ be the quotient map. Then the images $(\pi_1)^*[X, G]$, $(\pi_2)^*[Y, G]$,  $q^*[X\wedge Y, G]$ are subgroups of $[X \times Y, G]$, and $(\pi_1)^*:[X,G]\to [X\times Y,G]$, $(\pi_2)^*:[Y,G]\to [X\times Y,G]$, $q^*:[X\wedge Y,G]\to [X\times Y,G]$ are monomorphisms.
\end{lemma}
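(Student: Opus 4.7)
The first claim (images are subgroups) is essentially formal. Since $G$ is a topological group, pointwise multiplication makes $[W, G]$ a group for every based space $W$, and any based map $W' \to W$ induces a group homomorphism $[W, G] \to [W', G]$. Applying this to $\pi_1$, $\pi_2$, and $q$ immediately makes $(\pi_1)^*$, $(\pi_2)^*$, and $q^*$ group homomorphisms, so their images are automatically subgroups of $[X \times Y, G]$. All that remains is injectivity of each.

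For $(\pi_1)^*$ and (symmetrically) $(\pi_2)^*$ the argument is direct: the based inclusion $i_1: X \hookrightarrow X \times Y$, $x \mapsto (x, y_0)$, satisfies $\pi_1 \circ i_1 = \one_X$, so $(i_1)^* \circ (\pi_1)^*$ is the identity on $[X, G]$ and $(\pi_1)^*$ is split injective. The interesting case is $q^*$, since $q$ admits no based section in general. The plan is to build a left inverse $r: [X \times Y, G] \to [X \wedge Y, G]$ using the group operation on $G$. Given $F: X \times Y \to G$, set
\[
\tilde{F}(x, y) \;=\; F(x, y) \cdot F(x, y_0)^{-1} \cdot F(x_0, y)^{-1}.
\]
A direct check shows $\tilde F$ sends $X \vee Y \hookrightarrow X \times Y$ strictly to the basepoint, so it factors strictly through the quotient $q$ as $\tilde F = \bar F \circ q$ for a unique $\bar F: X \wedge Y \to G$; define $r([F]) = [\bar F]$. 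For any $[h] \in [X \wedge Y, G]$ the representative $F = h \circ q$ already vanishes on $X \vee Y$, so $\tilde F = F$ and $\bar F = h$, giving $r \circ q^* = \mathrm{id}$ and hence that $q^*$ is injective.

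The only step I foresee requiring real care is checking that $r$ descends to homotopy classes, which should follow from the naturality of the construction $F \mapsto \tilde F$ under based homotopies together with continuity of multiplication and inversion in $G$. Otherwise the argument is entirely formal and will apply to any based topological group.
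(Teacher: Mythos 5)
Your proposal is correct, and your handling of $q^*$ takes a genuinely different route from the paper. For the subgroup claim and the injectivity of $(\pi_1)^*$, $(\pi_2)^*$ you argue exactly as the paper does (split injectivity via the based inclusions $X,Y\hookrightarrow X\times Y$). For $q^*$, however, the paper invokes the Barratt--Puppe sequence of the cofibration $X\vee Y\to X\times Y\to X\wedge Y$, together with the fact that $\Sigma(X\vee Y)$ retracts off $\Sigma(X\times Y)$, to conclude that the connecting map into $[X\wedge Y,G]$ is trivial and hence that $q^*$ has trivial kernel. You instead build an explicit set-theoretic left inverse $r$ of $q^*$ by the pointwise formula $\tilde F(x,y)=F(x,y)\cdot F(x,y_0)^{-1}\cdot F(x_0,y)^{-1}$; the verification that $\tilde F$ kills $X\vee Y$ on the nose, that the construction is natural in based homotopies (using that $q\times\one_I$ is again a quotient map since $I$ is locally compact), and that $\tilde F=F$ when $F$ already factors through $q$, all check out, and a set-level left inverse suffices for injectivity. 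The trade-off is worth noting: your argument is more elementary and avoids the suspension splitting entirely, but it leans on $G$ being an honest topological group with strictly associative multiplication and strict inverses, which is all that is needed here since $G$ is a Lie group; the paper's cofibration argument would survive unchanged if $G$ were replaced by a grouplike H-space with only a homotopy-associative multiplication, where your pointwise formula would require repair.
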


\begin{proof}
Observe that $\pi_1$ and $\pi_2$ induce group homomorphisms $(\pi_1)^*:[X, G]\to[X\times Y, G]$ and~\mbox{$(\pi_2)^*:[X, G]\to[X\times Y, G]$}, so their images $(\pi_1)^*[X, G]$ and $(\pi_2)^*[Y, G]$ are subgroups of $[X\times Y, G]$. Moreover, let $j:X\to X\times Y$ be the inclusion. Since $\pi_1\circ j$ is the identity,~\mbox{$j^*\circ(\pi_1)^*$} is an isomorphism and $(\pi_1)^*$ is a monomorphism. Therefore $[X, G]$ is isomorphic to $(\pi_1)^*[X,G]$. Similarly $[Y, G]$ is isomorphic to $(\pi_2)^*[Y,G]$.

For $q^*[X\wedge Y,G]$, the cofibration $X\vee Y\overset{j'}{\to}X\times Y\overset{q}{\to}X\wedge Y$ induces an exact sequence
\[
\cdots\to[\Sigma X\times Y, G]\overset{\Sigma j'^*}{\longrightarrow}[\Sigma(X\vee Y), G]\longrightarrow[X\wedge Y, G]\overset{q^*}{\longrightarrow}[X\times Y, G]\overset{j'^*}{\longrightarrow}[X\vee Y, G],
\]
where $j'$ is the inclusion and $q$ is the quotient map. Since $\Sigma j':\Sigma(X\vee Y)\hookrightarrow\Sigma(X\times Y)$ has a right homotopy inverse, $q^*:[X\wedge Y, G]\to[X\times Y, G]$ is a monomorphism. Therefore~\mbox{$[X\wedge Y, G]$} is isomorphic to~\mbox{$q^*[X\wedge Y, G]$}, which is a subgroup of $[X\times Y, G]$.
\end{proof}

There are two groups in our discussion, namely $G$ and $[X\times Y, G]$. To distinguish their commutators, for any maps $f:X\to G$ and $g:Y\to G$ we use $\cm{f,g}$ to denote the map which sends $(x,y)\in X\times Y$ to~\mbox{$f(x)^{-1}g(y)^{-1}f(x)g(y)\in G$}, and for any maps $a:X\times Y\to G$ and $b:X\times Y\to G$ we use $[a,b]$ to denote the commutator $a^{-1}b^{-1}ab\in[X\times Y, G]$. Lemma~\ref{subgp} says that $f:X\to G$ and $g:Y\to G$ can be viewed as being in $[X\times Y, G]$. Their images are the compositions
\[
\begin{array}{c c c}
\tilde{f}:X\times Y\overset{\pi_1}{\longrightarrow}X\overset{f}{\longrightarrow}G
&\text{and}
&\tilde{g}:X\times Y\overset{\pi_2}{\longrightarrow}Y\overset{g}{\longrightarrow}G.
\end{array}
\]
Consider the diagram
\[\xymatrix{
																&X\times Y\ar[d]_{f\times g}\ar[r]^{q}\ar[dl]_{\triangle}	&X\wedge Y\ar[d]^{f\wedge g}\\
(X\times Y)\times(X\times Y)\ar[r]^-{\tilde{f}\times\tilde{g}}	&G\times G\ar[d]_{c}\ar[r]^{q'}								&G\wedge G\ar[d]^{\sm{\one_G,\one_G}}\\
																&G\ar[r]^{=}												&G
}\]
where $\triangle$ is the diagonal map, $c$ is the commutator map $\cm{\one_G,\one_G}$, and $q'$ is the quotient maps. The commutativity of the left triangle is due to the definitions of $\tilde{f}$ and $\tilde{g}$, the commutativity of the top square is due to the naturality of the quotient maps and the commutativity of the bottom square is due to the definition of~$\sm{\one_G,\one_G}$. The middle column is $\cm{f,g}$ and the right column is $\sm{f,g}$. In order to show that $\sm{f,g}$ is null-homotopic, it suffices to consider~$\cm{f,g}\simeq q^*\sm{f,g}$ since $q^*:[X\wedge Y, G]\to[X\times Y, G]$ is injective. Observe that $\cm{f,g}$ is homotopic to the composition
\[
X\times Y\overset{\triangle}{\longrightarrow}(X\times Y)\times(X\times Y)\overset{\tilde{f}\times\tilde{g}}{\longrightarrow}G\times G\overset{c}{\longrightarrow}G
\]
according to the diagram. That is $\cm{f,g}$ is the commutator $[\tilde{f},\tilde{g}]=\tilde{f}^{-1}\tilde{g}^{-1}\tilde{f}\tilde{g}$ in $[X\times Y, G]$. 

Let $\ad{a}{b}=b^{-1}ab$ be the conjugation of maps $a$ and $b$ in $[X\times Y, G]$. In group theory, commutators satisfy the following identities:
\begin{equation}\label{lemma_gp_id}
\begin{minipage}{0.9\textwidth}
\begin{enumerate}
\item	$[a,b]=a^{-1}\cdot\ad{a}{b}$;
\item	$[a,b]^{-1}=[b,a]$;
\item	$[a\cdot a',b]=\ad{[a,b]}{a'}\cdot[a',b]=[a,b]\cdot[a',[a,b]]^{-1}\cdot[a',b]$;
\item	$[a,b\cdot b']=[a,b']\cdot\ad{[a,b]}{b'}=[a,b']\cdot[a,b]\cdot[[a,b],b']$.
\end{enumerate}
\end{minipage}
\end{equation}

In particular, we can substitute $\tilde{f}$ and $\tilde{g}$ to $a$ and $b$ in these identities.

\begin{prop}\label{identities}
Let $f,f':X\to G$ and $g,g':Y\to G$ be maps. Then in $[X\times Y, G]$,
\begin{enumerate}[label=\textnormal{(\roman*)}]
\item\label{id_conj}	$\cm{f, g}=\tilde{f}^{-1}\cdot\ad{\tilde{f}}{\tilde{g}}$;
\item\label{id_inv_sm}	$\cm{f, g}^{-1}=\cm{g, f}\circ T$;
\item\label{id_lprod}	$\cm{f\cdot f',g}=\ad{\cm{f, g}}{\tilde{f}'}\cdot\cm{f',g}=\cm{f, g}\cdot[\tilde{f'},[\tilde{f},\tilde{g}]]^{-1}\cdot\cm{f', g}$;
\item\label{id_rprod}	$\cm{f,g\cdot g'}=\cm{f, g'}\cdot\ad{\cm{f,g}}{\tilde{g}'}=c(f,g')\cdot\cm{f, g}\cdot [[\tilde{f}, \tilde{g}],\tilde{g'}]$,
\end{enumerate}
where $T:Y\times X\to X\times Y$ is the swapping map.
\end{prop}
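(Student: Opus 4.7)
The plan is to prove each identity by translating the corresponding group-theoretic commutator identity in~(\ref{lemma_gp_id}) into the language of Samelson commutators, using the key observation (already established by the diagram preceding the proposition) that in $[X\times Y,G]$ the map $\cm{f,g}$ equals the commutator $[\tilde f,\tilde g]=\tilde f^{-1}\tilde g^{-1}\tilde f\tilde g$. The point of Lemma~\ref{subgp} is precisely to realize the otherwise incompatible homotopy sets $[X,G]$, $[Y,G]$, $[X\wedge Y,G]$ as subgroups of the common group $[X\times Y,G]$, so that all calculations take place in a single group and the four identities of~(\ref{lemma_gp_id}) are directly applicable.

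Before starting, I would record two easy compatibilities. First, the assignment $f\mapsto\tilde f=f\circ\pi_1$ is a group homomorphism $[X,G]\to[X\times Y,G]$ (pointwise multiplication is preserved by precomposition with $\pi_1$), and similarly for $g\mapsto\tilde g$; in particular $\widetilde{f\cdot f'}=\tilde f\cdot\tilde f'$ and $\widetilde{g\cdot g'}=\tilde g\cdot\tilde g'$. Second, the swap map $T\colon Y\times X\to X\times Y$ satisfies $\pi_1\circ T=\pi_2'$ and $\pi_2\circ T=\pi_1'$ for the projections on $Y\times X$, so $\cm{g,f}\circ T$ is literally the commutator $[\tilde g,\tilde f]$ in $[X\times Y,G]$.

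With these remarks each item reduces to a direct application. For~\ref{id_conj} I substitute $a=\tilde f$, $b=\tilde g$ in identity~(1). For~\ref{id_inv_sm} I apply identity~(2) to get $[\tilde f,\tilde g]^{-1}=[\tilde g,\tilde f]$ and then invoke the second compatibility above to rewrite this as $\cm{g,f}\circ T$. For~\ref{id_lprod} I apply identity~(3) with $a=\tilde f$, $a'=\tilde f'$, $b=\tilde g$, using $\widetilde{f\cdot f'}=\tilde f\cdot\tilde f'$ to identify the left-hand side as $\cm{f\cdot f',g}$; the second form then comes from rewriting $[a',[a,b]]^{-1}=[[a,b],a']$ via~(2). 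For~\ref{id_rprod} I apply identity~(4) with $a=\tilde f$, $b=\tilde g$, $b'=\tilde g'$, and unfold $\ad{[a,b]}{b'}=[a,b]\cdot[[a,b],b']$ using~(1) to obtain the second form.

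The proof is essentially bookkeeping and contains no topological content beyond Lemma~\ref{subgp}; the only mild obstacle is keeping the order of factors and the direction of conjugation straight (in particular the swap $[a',[a,b]]^{-1}=[[a,b],a']$ in~\ref{id_lprod}), which is handled by systematic use of identities~(1) and~(2).
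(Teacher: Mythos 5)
Your proposal is correct and takes essentially the same route as the paper: identify $\cm{f,g}$ with the commutator $[\tilde f,\tilde g]$ in $[X\times Y,G]$ via Lemma~\ref{subgp} and read the four identities off from~(\ref{lemma_gp_id}). The only point where the paper does anything beyond substitution is item (ii), which it verifies with a homotopy commutative diagram involving the inversion $r:G\to G$; your observation that precomposition with $T$ interchanges the two projections, so that $\cm{g,f}\circ T$ is literally $[\tilde g,\tilde f]$, is the same content in algebraic rather than diagrammatic form.
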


\begin{proof}
All identities come directly from the identities in~(\ref{lemma_gp_id}), while Identity~\ref{id_inv_sm} needs some explanation. Observe there exists a homotopy commutative diagram
\[\xymatrix{
X\times Y\ar[d]^-{T}\ar[r]^-{f\times g}	&G\times G\ar[d]^-{T}\ar[r]^-{c}	&G\ar[d]^-{r}\\
Y\times X\ar[r]^-{g\times f}			&G\times G\ar[r]^-{c}				&G
}\]
where $r:G\to G$ is the inversion. The upper direction around the diagram is $c(f,g)^{-1}$, while the lower direction is $c(g, f)\circ T$. So Identity~\ref{id_inv_sm} follows.
\end{proof}

\begin{remark}
The iterated commutator $[\tilde{f}'(x),[\tilde{f}(x),\tilde{g}(y)]]$ is the composition
\[
X\times Y\overset{\triangle_X\times\one_Y}{\longrightarrow}X\times X\times Y\overset{f'\times f\times g}{\longrightarrow}G\times G\times G\overset{\one_G\times c}{\longrightarrow}G\times G\overset{c}{\longrightarrow}G
\]
where $\triangle_X$ is the diagonal map and $\one_Y$ and $\one_G$ are the identity maps. Let $c_2=c\circ(\one_G\times c)$ be the 2-iterated commutator on $G$. Then we can write $[\tilde{f}',[\tilde{f},\tilde{g}]]$ as $c_2\circ(f'\times f\times g)\circ(\triangle_X\times\one_Y)$. However, we prefer to stick to the notation $[\tilde{f}',[\tilde{f},\tilde{g}]]$ because it better indicates it is the commutator of which maps, while $c_2\circ(f'\times f\times g)\circ(\triangle_X\times\one_Y)$ looks long and confusing.
\end{remark}

Since our group $[X\times Y, G]$ has a topological interpretation, the topologies of $X$ and $Y$ add extra algebraic properties to its group structure.

\begin{lemma}\label{lemma_co-H_id}
Let $f, g$ and $h: X\times Y\to G$ be maps. If $X$ is a co-H-space and the restrictions of $f$ and $g$ to $X\vee Y$ are null-homotopic, then in $[X\times Y, G]$ we have
\[
\begin{array}{c c c}
f\cdot g=g\cdot f
&\text{and}
&[f\cdot g, h]=[f, h]\cdot[g, h].
\end{array}
\]
\end{lemma}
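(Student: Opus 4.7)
The plan is to reduce both identities to the fact that $[X\wedge Y, G]$ is abelian. Since $f|_{X\vee Y}$ and $g|_{X\vee Y}$ are null-homotopic, the exact sequence
\[
[X\wedge Y, G]\overset{q^*}{\longrightarrow}[X\times Y, G]\overset{(j')^*}{\longrightarrow}[X\vee Y, G]
\]
from the cofibration $X\vee Y\overset{j'}{\hookrightarrow}X\times Y\overset{q}{\to}X\wedge Y$ shows that $f$ and $g$ both lie in $q^*[X\wedge Y, G]$; pick lifts $\bar f, \bar g: X\wedge Y\to G$ with $f\simeq q^*\bar f$ and $g\simeq q^*\bar g$.

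Next, because $X$ is a co-H-space with comultiplication $\psi:X\to X\vee X$, the space $X\wedge Y$ inherits a co-H structure via
\[
X\wedge Y\overset{\psi\wedge\one_Y}{\longrightarrow}(X\vee X)\wedge Y\cong (X\wedge Y)\vee(X\wedge Y).
\]
Thus $[X\wedge Y, G]$ carries both the pointwise H-group multiplication coming from $G$ and the multiplication coming from this co-H structure; the two share the constant map as a common two-sided unit and satisfy the interchange law, so by the Eckmann-Hilton principle they coincide and $[X\wedge Y, G]$ is abelian. Because $q^*$ is an injective group homomorphism (Lemma~\ref{subgp}), its image $q^*[X\wedge Y, G]$ is an abelian subgroup of $[X\times Y, G]$.

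The first identity is now immediate: $f\cdot g=q^*(\bar f\cdot \bar g)=q^*(\bar g\cdot\bar f)=g\cdot f$. For the second, apply the group-theoretic identity~(\ref{lemma_gp_id})(3),
\[
[f\cdot g, h]=[f, h]\cdot[g, [f, h]]^{-1}\cdot[g, h],
\]
so it suffices to show $[g, [f, h]]=1$. Restricting $[f, h]=f^{-1}h^{-1}fh$ to $X$ (respectively $Y$) gives the commutator of $f|_X\simeq *$ with $h|_X$ (respectively of $f|_Y\simeq *$ with $h|_Y$) in $[X, G]$ (resp.\ $[Y, G]$), which is null, so $[f, h]|_{X\vee Y}\simeq *$ and $[f, h]$ also lies in $q^*[X\wedge Y, G]$. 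Since both $g$ and $[f, h]$ sit in this abelian subgroup, they commute in $[X\times Y, G]$, and $[g, [f, h]]=1$ follows.

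The main obstacle is the Eckmann-Hilton step, in particular verifying that the mixed H/co-H situation on $[X\wedge Y, G]$ really does satisfy the interchange law with a common unit. This is standard but worth spelling out carefully; once it is in place, the remainder of the argument is elementary manipulation with the cofibration sequence plus the monomorphism statement of Lemma~\ref{subgp}, together with the observation that nullity of $f$ on $X\vee Y$ propagates to nullity of $[f, h]$ on $X\vee Y$.
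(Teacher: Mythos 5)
Your proof is correct and follows essentially the same route as the paper's: lift $f$, $g$ (and later $[f,h]$) through $q^*$ using the cofibration sequence, use that $[X\wedge Y,G]$ is abelian together with the injectivity of $q^*$ from Lemma~\ref{subgp}, and then kill the middle term $[g,[f,h]]$ in identity~(\ref{lemma_gp_id})(3). The only difference is that you spell out the co-H structure on $X\wedge Y$ and the Eckmann--Hilton step, which the paper takes for granted.
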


\begin{proof}
Let $q:X\times Y\to X\wedge Y$ be the quotient map. Observe that there exist $f'$ and $g'$ in~$[X\wedge Y, G]$ such that $f=q^*f'$ and $g=q^*g'$. Since $X\wedge Y$ is a co-H-space, $[X\wedge Y, G]$ is an abelian group and $f'$ and $g'$ commute. Therefore $f$ and $g$ commute as $q^*$ is a monomorphism.

To show the linearity, we start with Proposition~\ref{identities}~\ref{id_lprod}
\[
[f\cdot g, h]=[f, h]\cdot[g, [f, h]]\cdot[g, h].
\]
Since $[f, h]$ is also null-homotopic on $X\vee Y$, it commutes with $g$ and their commutator~$[g, [f,h]]$ is trivial. Therefore we have $[f\cdot g, h]=[f,h]\cdot[g,h]$.
\end{proof}

\subsection{Main body of the proof}
We go back to the proof of Lemma~\ref{lemma_sm_mk_null}. Recall that $m_k$ is the composition $m_k:A^k\overset{\imath^k}{\longrightarrow}G^k\overset{m^k}{\longrightarrow}G$. To distinguish the spaces $A$'s, denote the $i^{\text{th}}$ copy of $A$ in $A^k$ by $A_i$. Let $a_i$ and $m'_{k-1}$ be the compositions
\[
\begin{array}{c c c}
a_i:A^k\overset{proj}{\longrightarrow}A_i\overset{\imath}{\longrightarrow}G
&\text{and}
&m'_{k-1}:A^k\overset{proj}{\longrightarrow}\prod^{k-1}_{i=1}A_i\overset{m_{k-1}}{\longrightarrow}G
\end{array}
\]
respectively. Then we have $m_k=m'_{k-1}\cdot a_i$ in $[A^k, G]$. Include $\sm{m_k,m_l}$ in $[A^k\times A^l, G]$ by Lemma~\ref{subgp}. It becomes the commutator $\cm{m_k, m_l}=[\tilde{m}_k, \tilde{m}_l]$, where $\tilde{m}_k$ and $\tilde{m}_l$ are compositions
\[
\begin{array}{c c c}
\tilde{m}_k:A^k\times A^l\overset{proj}{\longrightarrow}A^k\overset{m_k}{\longrightarrow}G
&\text{and}
&\tilde{m}_l:A^k\times A^l\overset{proj}{\longrightarrow}A^l\overset{m_l}{\longrightarrow}G.
\end{array}
\]
Let $\tilde{a}_i$ and $\tilde{m}'_{k-1}$ be compositions
\[
\begin{array}{c c c}
\tilde{a}_i:A^k\times A^l\overset{proj}{\longrightarrow}A_i\overset{\imath}{\longrightarrow}G
&\text{and}
&\tilde{m}'_{k-1}:A^k\times A^l\overset{proj}{\longrightarrow}\prod^{k-1}_{i=1}A_i\overset{m_{k-1}}{\longrightarrow}G.
\end{array}
\]
Then in $[A^k\times A^l, G]$ we have $\tilde{m}_k=\tilde{m}'_{k-1}\cdot\tilde{a}_k$.

Assume the homotopy nilpotence class of $G$ is less than $p^r+1$. Now we use induction on~$k$ and $l$ show that $\cm{m_k,m_l}^{p^r}$ is null-homotopic. To start with, we show that this is true for~$k=1$ or $l=1$.

\begin{lemma}\label{induction_1}
If $\cm{\imath,\imath}^{p^r}$ is null-homotopic, then $\cm{m_k,\imath}^{p^r}$ and $\cm{\imath,m_l}^{p^r}$ are null-homotopic for all $k$ and $l$.
\end{lemma}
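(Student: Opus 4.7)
The plan is to reduce the $\cm{\imath,m_l}$ statement to the $\cm{m_k,\imath}$ statement via Proposition~\ref{identities}\ref{id_inv_sm}: the identity $\cm{\imath,m_l}=\cm{m_l,\imath}^{-1}\circ T$ immediately gives $\cm{\imath,m_l}^{p^r}\simeq *$ once $\cm{m_l,\imath}^{p^r}\simeq *$ (since taking inverses and precomposing with the swap preserve null-homotopy of the $p^r$-th power). The $\cm{m_k,\imath}$ statement is then proved by induction on $k$, with base case $k=1$ given by the hypothesis.

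For the inductive step ($k\geq 2$), I write $\tilde{m}_k=\tilde{m}'_{k-1}\cdot\tilde{a}_k$ in $[A^k\times A,G]$ and apply Proposition~\ref{identities}\ref{id_lprod} to obtain
\[
\cm{m_k,\imath}=\cm{m'_{k-1},\imath}\cdot[\tilde{a}_k,\,\cm{m'_{k-1},\imath}]^{-1}\cdot\cm{a_k,\imath}.
\]
Setting $C=\cm{m'_{k-1},\imath}$ and $M=[\tilde{a}_k,C]$, I next verify that each factor restricts to a null-homotopic map on $A^k\vee A$: the Samelson products $C$ and $\cm{a_k,\imath}$ do so tautologically, while $M$ restricts to null on the $A^k$-summand because $C$ does, and on the $A$-summand because $\tilde{a}_k$ does (a commutator with a null map is null). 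By Lemma~\ref{subgp}, each factor therefore lies in the subgroup $q^*[A^k\wedge A,G]\subseteq[A^k\times A,G]$. Since $A$ is a co-H-space, $A^k\wedge A$ is a suspension, so $[A^k\wedge A,G]$ is abelian and the three factors pairwise commute, letting me split
\[
\cm{m_k,\imath}^{p^r}=\cm{m'_{k-1},\imath}^{p^r}\cdot M^{-p^r}\cdot\cm{a_k,\imath}^{p^r}.
\]
The first factor is null by the inductive hypothesis (as $\cm{m'_{k-1},\imath}=\cm{m_{k-1},\imath}\circ(\text{proj}\times\one_A)$) and the third is null by the hypothesis of the lemma (as $\cm{a_k,\imath}=\cm{\imath,\imath}\circ(\text{proj}_k\times\one_A)$).

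The heart of the argument is then $M^{p^r}\simeq *$. Using Proposition~\ref{identities}\ref{id_conj}, I rewrite $M=[\tilde{a}_k,C]=(\ad{C}{\tilde{a}_k})^{-1}\cdot C$, and check that the conjugate $\ad{C}{\tilde{a}_k}=\tilde{a}_k^{-1}C\tilde{a}_k$ is also null on $A^k\vee A$: on the $A^k$-summand $C$ vanishes (so the conjugate collapses to a null map), and on the $A$-summand $\tilde{a}_k$ vanishes. Thus $\ad{C}{\tilde{a}_k}$ likewise lies in the abelian subgroup $q^*[A^k\wedge A,G]$ and commutes with $C$. Because conjugation is a group automorphism of $[A^k\times A,G]$, $(\ad{C}{\tilde{a}_k})^{p^r}=\ad{C^{p^r}}{\tilde{a}_k}\simeq *$ by the inductive hypothesis; combined with $C^{p^r}\simeq *$ and the commutativity just established, this yields $M^{p^r}=(\ad{C}{\tilde{a}_k})^{-p^r}\cdot C^{p^r}\simeq *$, completing the induction.

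The main obstacle I anticipate is purely bookkeeping: being careful about which of the maps on $A^k\times A$ lie in the abelian image of $q^*$ and which are ``full'' elements of the nonabelian group $[A^k\times A,G]$, particularly when rewriting $M$ as a product of its conjugate with $C$. It is worth noting that this base-case lemma does \emph{not} require the homotopy nilpotence hypothesis on $G$; the nilpotence condition will only be needed in the subsequent induction that reduces the general $\cm{m_k,m_l}$ to the $k=1$ and $l=1$ cases handled here.
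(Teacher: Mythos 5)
Your proof is correct and follows essentially the same route as the paper: induction on $k$ via Proposition~\ref{identities}~\ref{id_lprod}, the observation that every factor vanishes on the wedge $A^k\vee A$ and hence lies in the abelian subgroup $q^*[A^k\wedge A,G]$, and the fact that conjugation is an automorphism so that $p^r$-th powers pass inside $\ad{-}{\tilde{a}_k}$. The only cosmetic differences are that the paper works with the unexpanded two-factor form $\ad{\cm{m'_k,\imath}}{\tilde{a}_{k+1}}\cdot\cm{a_{k+1},\imath}$ of identity~\ref{id_lprod}, which makes your separate treatment of $M=[\tilde{a}_k,C]$ (reassembling it back into that conjugate) unnecessary, and that it disposes of $\cm{\imath,m_l}$ by a symmetric induction rather than your swap-map reduction; also note that $A^k\wedge A$ need only be a co-H-space, not a suspension, for $[A^k\wedge A,G]$ to be abelian.
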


\begin{proof}
We prove that $\cm{m_k,\imath}^{p^r}$ is null-homotopic by induction. Since $m_1=\imath$,~\mbox{$\cm{m_1,\imath}^{p^r}=\cm{\imath,\imath}^{p^r}$} is null-homotopic by assumption. Suppose $\cm{m_k,\imath}^{p^r}$ is null-homotopic. We need to show that $\cm{m_{k+1},\imath}^{p^r}$ is also null-homotopic. Apply Proposition~\ref{identities}~\ref{id_lprod} to obtain
\[
\cm{m_{k+1},\imath}=\cm{m'_k\cdot a_{k+1},\imath}=\ad{\cm{m'_k,\imath}}{\tilde{a}_{k+1}}\cdot\cm{a_{k+1},\imath}.
\]
Observe that $\cm{a_{k+1},\imath}$ and $\ad{\cm{m'_k,\imath}}{\tilde{a}_{k+1}}$ are null-homotopic on $A^{k+1}\vee A$ and $A^{k+1}\wedge A$ is a co-H-space. Lemma~\ref{lemma_co-H_id} implies that $\cm{a_{k+1},\imath}$ and $\ad{\cm{m'_k,\imath}}{\tilde{a}_{k+1}}$ commute and we have
\begin{eqnarray*}
\cm{m_{k+1},\imath}^{p^r}
&=&\left(\ad{\cm{m'_k,\imath}}{\tilde{a}_{k+1}}\cdot\cm{a_{k+1},\imath}\right)^{p^r}\\
&=&\left(\ad{\cm{m'_k,\imath}}{\tilde{a}_{k+1}}\right)^{p^r}\cdot\cm{a_{k+1},\imath}^{p^r}\\
&=&\ad{\left(\cm{m'_k,\imath}^{p^r}\right)}{\tilde{a}_{k+1}}\cdot\cm{a_{k+1},\imath}^{p^r}.
\end{eqnarray*}
The last term $\cm{a_{k+1},\imath}^{p^r}$ is null-homotopic since $a_{k+1}$ is the inclusion $A_{k+1}\overset{\imath}{\to}G$. Also, by the induction hypothesis $\cm{m'_k,\imath}^{p^r}$ is null-homotopic. Therefore $\cm{m_{k+1},\imath}^{p^r}$ is null-homotopic and the induction is completed.

Similarly, we can show that $\cm{\imath, m_l}^{p^r}$ is null-homotopic for all $l$.
\end{proof}

As a consequence of Lemma~\ref{induction_1}, the following lemma implies that the order of
\[
\sm{\imath, \one_G}:A\wedge G\overset{\imath\wedge\one_G}{\longrightarrow}G\wedge G\overset{\sm{\one_G,\one_G}}{\longrightarrow}G
\]
equals to the order of its restriction $\sm{\imath,\imath}$ without assuming the condition on the homotopy nilpotence of $G$. 

\begin{lemma}\label{A wedge G}
The map $p^r\circ\sm{\imath,\imath}$ is null-homotopic if and only if $p^r\circ\sm{\one_G,\imath}$ and $p^r\circ\sm{\imath,\one_G}$ are null-homotopic.
\end{lemma}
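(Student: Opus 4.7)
The sufficiency direction is immediate from the factorization $\sm{\imath,\imath}=\sm{\one_G,\imath}\circ(\imath\wedge\one_A)=\sm{\imath,\one_G}\circ(\one_A\wedge\imath)$, so I would dispose of it in one line. The substance is the necessity direction, and my plan is a three-step bootstrap that threads the earlier lemmas together but, crucially, without invoking the homotopy nilpotence hypothesis of Lemma~\ref{lemma_sm_mk_null}.

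For the first step, I would invoke Lemma~\ref{induction_1} verbatim. Its induction on $k$ uses only Proposition~\ref{identities}~\ref{id_lprod}, the co-H-space identity of Lemma~\ref{lemma_co-H_id}, and the fact that $a_{k+1}$ factors through $\imath$; it never calls on the homotopy nilpotence of $G$. So from $\cm{\imath,\imath}^{p^r}\simeq *$ the lemma unconditionally yields $\cm{m_k,\imath}^{p^r}\simeq *$ and $\cm{\imath,m_l}^{p^r}\simeq *$ for every $k$ and $l$. Passing through the monomorphism $q^*$ of Lemma~\ref{subgp} upgrades these to $p^r\circ\sm{m_k,\imath}\simeq *$ and $p^r\circ\sm{\imath,m_l}\simeq *$. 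The second step is then a direct application of Lemma~\ref{k component of JA}: taking $X=A$ and $f=\imath$ in its first part gives $p^r\circ\sm{\tilde{\imath},\imath}\simeq *$, and the second part analogously gives $p^r\circ\sm{\imath,\tilde{\imath}}\simeq *$.

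The third step is to trade $\tilde{\imath}$ for $\one_G$, mirroring the proof of Lemma~\ref{rho}. By Lemma~\ref{lemma_rho+s}, $e=\tilde{\imath}\circ s$ is a self-homotopy-equivalence of $G$; letting $e'$ be a homotopy inverse so that $\tilde{\imath}\circ s\circ e'\simeq\one_G$, I obtain
\[
p^r\circ\sm{\one_G,\imath}\;\simeq\;p^r\circ\sm{\tilde{\imath}\circ s\circ e',\imath}\;=\;p^r\circ\sm{\tilde{\imath},\imath}\circ(s\circ e'\wedge\one_A)\;\simeq\;*,
\]
and the symmetric calculation handles $p^r\circ\sm{\imath,\one_G}\simeq *$. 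The one step where I expect to pause is verifying that Lemma~\ref{induction_1} really is unconditional; once that is in hand the rest is formal assembly of tools already built, exactly as the remark preceding the lemma foreshadows.
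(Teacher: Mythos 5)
Your proposal is correct and follows essentially the same route as the paper: Lemma~\ref{induction_1} (which, as you and the paper both note, does not need the homotopy nilpotence hypothesis), then Lemma~\ref{k component of JA} with $X=A$ and $f=\imath$, then the homotopy equivalence $\tilde{\imath}\circ s$ of Lemma~\ref{lemma_rho+s} to replace $\tilde{\imath}$ by $\one_G$. Your write-up is slightly more explicit than the paper's (e.g.\ in passing through the monomorphism $q^*$ and in spelling out the easy direction), but the argument is the same.
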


\begin{proof}
We only need to prove the sufficient condition. If $p^r\circ\sm{\imath,\imath}$ is null-homotopic, then~\mbox{$p^r\circ\sm{\imath,m_l}$} is null-homotopic for all~$l$ by Lemma~\ref{induction_1}. Lemma~\ref{k component of JA} implies that $p^r\circ\sm{\imath,\tilde{\imath}}:A\wedge\Omega\Sigma A\to G$ is null-homotopic. Since $\tilde{\imath}\circ s$ is a homotopy equivalence by Lemma~\ref{lemma_rho+s}, $p^r\circ\sm{\imath,\one_G}$ is null-homotopic.

The sufficient condition for $p^r\circ\sm{\one_G,\imath}$ can be proved similarly.
\end{proof}

Now suppose $\cm{m_k,m_l}^{p^r}$ is trivial for some fixed $k$ and $l$ in $[A^k\times A^l, G]$. The next step is to show that $\cm{m_{k+1},m_l}^{p^r}$ is trivial in $[A^{k+1}\times A^l, G]$. At first glance we can follow the proof of Lemma~\ref{induction_1} and apply Lemmas~\ref{identities} and~\ref{lemma_co-H_id} to split $\cm{m_{k+1},m_l}^{p^r}$ into $\cm{m'_k,m_l}^{p^r}$ and~\mbox{$\cm{a_{k+1},m_l}^{p^r}$} which are null-homotopic by the induction hypothesis. However, when $l>1$,~$A^l$ is not a co-H-space and we cannot use Lemma~\ref{lemma_co-H_id} to argue that $\cm{m'_k,m_l}$ and $\cm{a_{k+1},m_l}$ commute. Instead, apply Proposition~\ref{identities}~\ref{id_lprod} to obtain
\begin{eqnarray*}
\cm{m_{k+1},m_l}
&=&\cm{m'_k\cdot a_{k+1}, m_l}\\
&=&\cm{m'_k,m_l}\cdot[\cm{m'_k,m_l},\tilde{a}_{k+1}]\cdot\cm{a_{k+1},m_l}.
\end{eqnarray*}
Denote $\cm{m'_k,m_l}$ and $[\cm{m'_k,m_l},\tilde{a}_{k+1}]\cdot\cm{a_{k+1},m_l}$ by $\alpha_k$ and $\beta_k$ respectively. Observe that the restrictions of any powers and commutators involving $\beta_k$ to $A_{k+1}\vee(A^k\times A^l)$ are null-homotopic and $A_{k+1}$ is a co-H-space. Therefore they enjoy the conditions of Lemma~\ref{lemma_co-H_id}.

\begin{lemma}\label{alpha_k+1^n}
For any natural number $n$, we have
\[
(\alpha_k\cdot\beta_k)^n=\alpha_k^n\cdot\beta_k^n\cdot\left(\prod^{n-1}_{i=1}[\beta_k,\alpha_k^i]\right).
\]
\end{lemma}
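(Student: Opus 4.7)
The plan is to prove the identity by induction on $n$, with the base case $n=1$ being the trivial equality $\alpha_k\beta_k=\alpha_k\beta_k$, where the empty product on the right is interpreted as the identity.

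The inductive step rests on two consequences of Lemma~\ref{lemma_co-H_id}, applied with the co-H-space $A_{k+1}$ and the remaining factor $A^k\times A^l$. As noted just before the statement of the lemma, every power or commutator involving $\beta_k$ restricts trivially to $A_{k+1}\vee(A^k\times A^l)$, so Lemma~\ref{lemma_co-H_id} yields: \textbf{(i)} $\beta_k$ commutes with every $[\beta_k,\alpha_k^i]$; and \textbf{(ii)} any two commutators $[\beta_k,\alpha_k^i]$ and $[\beta_k,\alpha_k^j]$ commute.

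For the induction step, set $C_n=\prod_{i=1}^{n-1}[\beta_k,\alpha_k^i]$ and assume $(\alpha_k\beta_k)^n=\alpha_k^n\beta_k^n C_n$. Using the elementary group-theoretic identity $\beta_k\alpha_k^n=\alpha_k^n\beta_k\cdot[\beta_k,\alpha_k^n]$, which is a direct consequence of the definition of the commutator, I would rewrite
\[
(\alpha_k\beta_k)^{n+1}=\alpha_k\beta_k\cdot\alpha_k^n\beta_k^n C_n=\alpha_k^{n+1}\beta_k\cdot[\beta_k,\alpha_k^n]\cdot\beta_k^n C_n.
\]
Property~(i) lets me slide $[\beta_k,\alpha_k^n]$ past $\beta_k^n$, and property~(ii) lets me slide it past $C_n$, collapsing the expression to $\alpha_k^{n+1}\beta_k^{n+1}C_n[\beta_k,\alpha_k^n]=\alpha_k^{n+1}\beta_k^{n+1}C_{n+1}$, as required.

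The only subtle point is that $\alpha_k=\cm{m'_k,m_l}$ does \emph{not} restrict trivially to $A^k\times A^l$, so Lemma~\ref{lemma_co-H_id} cannot be invoked to commute $\alpha_k$ past $\beta_k$ directly; the correction factor $\prod[\beta_k,\alpha_k^i]$ is precisely the obstruction recording this non-commutation, which is why $\alpha_k^i$ (rather than $\alpha_k$ alone) appears inside the commutators. Once this distinction is kept in mind, the argument is essentially bookkeeping, and I expect no serious obstacle.
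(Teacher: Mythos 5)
Your proposal is correct and follows essentially the same route as the paper's proof: induction on $n$, the rewriting $\beta_k\alpha_k^n=\alpha_k^n\cdot\beta_k\cdot[\beta_k,\alpha_k^n]$, and Lemma~\ref{lemma_co-H_id} to commute the factors involving $\beta_k$ so that $[\beta_k,\alpha_k^n]$ can be slid to the end of the product.
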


\begin{proof}
We induct on $n$. The statement of the lemma is trivial for $n=1$. Assume the formula holds for an integer $n$. For the $(n+1)$ case, using the induction hypothesis we have
\begin{eqnarray*}
(\alpha_k\cdot\beta_k)^{n+1}
&=&\alpha_k\cdot\beta_k\cdot(\alpha_k\cdot\beta_k)^n\\
&=&\alpha_k\cdot\beta_k\cdot\alpha_k^n\cdot\beta_k^n\cdot\left(\prod^{n-1}_{i=1}[\beta_k,\alpha_k^i]\right)\\
&=&\alpha_k^{n+1}\cdot\beta_k\cdot[\beta_k,\alpha_k^n]\cdot\beta_k^n\cdot\left(\prod^{n-1}_{i=1}[\beta_k,\alpha_k^i]\right)
\end{eqnarray*} 
In the last line $[\beta_k,\alpha_k^n]$ is formed after we swap $\beta_k$ and $\alpha_k^n$. Since the restrictions of $\beta_k$,~$\beta^n_k$ and~$[\beta_k,\alpha^i_k]$ to $A_{k+1}\vee(A^k\times A^l)$ are null-homotopic, they commute by Lemma~\ref{lemma_co-H_id}. By commuting the terms, the statement follows.
\end{proof}

In order to prove the triviality of $\cm{m_{k+1},m_l}^{p^r}$, by Lemma~\ref{alpha_k+1^n} it suffices to show that~\mbox{$\alpha_k^{p^r},\beta_k^{p^r}$} and~\mbox{$\prod^{p^r-1}_{i=1}[\beta_k,\alpha_k^i]$} are null-homotopic. By the induction hypothesis $\cm{m_k,m_l}^{p^r}$ is null-homotopic, so $\alpha_k^{p^r}=\cm{m'_k,m_l}^{p^r}$ is null-homotopic. It remains to show that $\beta_k^{p^r}$ and $\prod^{p^r-1}_{i=1}[\beta_k,\alpha_k^i]$ are null-homotopic.

\begin{lemma}\label{alpha_beta}
If $\cm{\imath,\imath}^{p^r}$ and $\alpha_k^{p^r}$ are null-homotopic, then so is $\beta_k^{p^r}$.
\end{lemma}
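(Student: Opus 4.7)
The plan is to decompose $\beta_k$ as the product $u\cdot v$, where $u := [\alpha_k,\tilde{a}_{k+1}]$ and $v := \cm{a_{k+1},m_l}$, and prove each factor has null-homotopic $p^r$-th power separately. First I would verify that both $u$ and $v$ restrict trivially to $A_{k+1}\vee(A^k\times A^l)$: the map $\alpha_k=\cm{m'_k,m_l}$ is independent of the $(k+1)$-st coordinate, so it vanishes on $A_{k+1}$, while $\tilde{a}_{k+1}$ is constant outside $A_{k+1}$, so it vanishes on $A^k\times A^l$; a symmetric inspection handles $v$. Because $A_{k+1}$ is a co-H-space, Lemma~\ref{lemma_co-H_id} then forces $u$ and $v$ to commute in $[A^{k+1}\times A^l,G]$, so
\[
\beta_k^{p^r}=u^{p^r}\cdot v^{p^r},
\]
and it is enough to show each factor is null-homotopic.

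The factor $v^{p^r}$ is the easier one. Since $a_{k+1}$ is the projection $A^{k+1}\to A_{k+1}\cong A$ followed by $\imath$, the map $v=\cm{a_{k+1},m_l}$ is the pull-back of $\cm{\imath,m_l}$ along the projection $A^{k+1}\times A^l\to A\times A^l$. The hypothesis $\cm{\imath,\imath}^{p^r}\simeq *$ together with the second statement of Lemma~\ref{induction_1} gives $\cm{\imath,m_l}^{p^r}\simeq *$, and hence $v^{p^r}\simeq *$.

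The delicate part is showing $u^{p^r}=[\alpha_k,\tilde{a}_{k+1}]^{p^r}\simeq *$. My strategy is to convert this expression into $[\alpha_k^{p^r},\tilde{a}_{k+1}]$, which is null-homotopic by the hypothesis $\alpha_k^{p^r}\simeq *$, at the cost of explicit iterated-commutator correction terms. Starting from Proposition~\ref{identities}~\ref{id_lprod} and iteratively expanding $[\alpha_k^n,\tilde{a}_{k+1}]$ produces a factor $[\alpha_k,\tilde{a}_{k+1}]^n$ together with correction terms that are iterated commutators of $\alpha_k$ and $\tilde{a}_{k+1}$ of strictly greater depth. The assumption that the homotopy nilpotence class of $G$ is less than $p^r+1$ kills all iterated commutators of depth exceeding $p^r$, so at the critical value $n=p^r$ the collection has only finitely many survivors, each of which can be further simplified using Lemma~\ref{lemma_co-H_id} (applied with $X=A_{k+1}$) once one observes that the relevant factors vanish on $A_{k+1}\vee(A^k\times A^l)$. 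The main obstacle is precisely this Hall-Petresco-style bookkeeping: each rearrangement spawns further commutators that must be tracked to confirm they land in the nilpotency-killed range, and marshaling these into a clean identity $u^{p^r}\simeq *$ is the most technical step of the argument.
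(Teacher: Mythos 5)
Your decomposition $\beta_k=u\cdot v$ with $u=[\cm{m'_k,m_l},\tilde{a}_{k+1}]$ and $v=\cm{a_{k+1},m_l}$, the verification that both factors vanish on $A_{k+1}\vee(A^k\times A^l)$ so that Lemma~\ref{lemma_co-H_id} lets you split $\beta_k^{p^r}=u^{p^r}\cdot v^{p^r}$, and the disposal of $v^{p^r}$ via Lemma~\ref{induction_1} all match the paper. The gap is in $u^{p^r}$, and it is a real one: the collection process you sketch does not close. Expanding $[\tilde{a}_{k+1},\alpha_k^{p^r}]$ as in Lemma~\ref{prod_express} produces $[\tilde{a}_{k+1},\alpha_k]^{p^r}$ times correction terms $c_j(\tilde{a}_{k+1},\alpha_k,\cdots,\alpha_k)^{\binom{p^r}{j}}$ for $2\leq j\leq p^r$. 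The nilpotency hypothesis kills only the single term of depth exceeding $p^r$. For $2\leq j\leq p^r-1$ the exponent $\binom{p^r}{j}$ need not be divisible by $p^r$ (for instance $\binom{p^r}{p}$ has $p$-adic valuation $r-1$), and even where it is, you have no control on the order of $c_j(\tilde{a}_{k+1},\alpha_k,\cdots,\alpha_k)$: Lemma~\ref{lemma_co-H_id} would let you push a power onto the first entry only if that entry vanished on $A_{k+1}\vee(A^k\times A^l)$, which $\tilde{a}_{k+1}$ does not, and no power of $\tilde{a}_{k+1}$ or of the innermost $\alpha_k$ is known to be null. You explicitly defer this ``bookkeeping'' as the most technical step, but it is exactly the content of the lemma, so the hard half is missing.

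The paper avoids the collection entirely with one observation: the $p^r$-th power in $[A^{k+1}\times A^l,G]$ is post-composition with the power map $p^r:G\to G$, and $u$ factors as $\cm{\one_G,\imath}\circ(\cm{m'_k,m_l}\times\one_A)$ up to a shuffle of coordinates, so $u^{p^r}=\bigl(p^r\circ\cm{\one_G,\imath}\bigr)\circ(\cm{m'_k,m_l}\times\one_A)$. Since $p^r\circ\cm{\one_G,\imath}$ is already null-homotopic by Lemma~\ref{A wedge G} (a consequence of $\cm{\imath,\imath}^{p^r}\simeq *$ alone, via Lemmas~\ref{induction_1}, \ref{k component of JA} and \ref{lemma_rho+s}), so is $u^{p^r}$. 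Note that this argument uses neither the hypothesis on $\alpha_k^{p^r}$ nor the nilpotency of $G$; the universal statement about $\sm{\one_G,\imath}$ is the ingredient your route is missing.
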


\begin{proof}
By definition, $\beta_k=[\cm{m'_k,m_l},\tilde{a}_{k+1}]\cdot\cm{a_{k+1},m_l}$. Observe that the restrictions of~$[\cm{m'_k,m_l},\tilde{a}_{k+1}]$ and~\mbox{$\cm{a_{k+1},m_l}$} to $A_{k+1}\vee(A^k\times A^l)$ are null-homotopic. By Lemma~\ref{lemma_co-H_id} they commute and we have
\[
\beta_k^{p^r}=\left([\cm{m'_k,m_l},\tilde{a}_{k+1}]\cdot\cm{a_{k+1},m_l}\right)^{p^r}=[\cm{m'_k,m_l},\tilde{a}_{k+1}]^{p^r}\cdot\cm{a_{k+1},m_l}^{p^r}.
\]
Since $\cm{\imath,\imath}^{p^r}$ is null-homotopic, so is $\cm{a_{k+1},m_l}^{p^r}$ by Lemma~\ref{induction_1}.

On the other hand, recall that $c(m'_k,m_l)$ and $\tilde{a}_{k+1}$ are the compositions
\[
\begin{array}{c c c}
c(m'_k,m_l):A^{k+1}\times A^l\overset{proj}{\longrightarrow}A^k\times A^l\overset{c(m_k,m_l)}{\longrightarrow}G
&\text{and}
&\tilde{a}_{k+1}:A^{k+1}\times A^l\overset{proj}{\longrightarrow}A_{k+1}\overset{\imath}{\longrightarrow}G
\end{array}
\]
respectively. Therefore we have
\begin{eqnarray*}
[\cm{m'_k,m_l}, \tilde{a}_{k+1}]^{p^r}
&=&p^r\circ c(c(m'_k,m_l),\imath)\\
&=&p^r\circ\cm{\one_G,\imath}\circ(c(m'_k,m_l)\times\one_A)
\end{eqnarray*}
where $\one_A$ is the identity map of $A_{k+1}$. Since $p^r\circ\cm{\one_G,\imath}$ is null-homotopic by Lemma~\ref{A wedge G},~\mbox{$[\cm{m'_k,m_l}, \tilde{a}_{k+1}]^{p^r}$} is null-homotopic and so is $\beta^{p^r}_k$.
\end{proof}

\begin{lemma}\label{prod_express}
For any natural number $n$, we have
\[
\prod^{n-1}_{i=1}[\beta_k,\alpha_k^i]=\prod^{n-1}_{i=1}c_i(\beta_k,\alpha_k,\cdots,\alpha_k)^{\binom{n}{i+1}}
\]
where $c_i(\beta_k,\alpha_k,\cdots,\alpha_k)=[[\cdots[[\beta_k,\alpha_k],\alpha_k]\cdots], \alpha_k]$ is the $i$-iterated commutator.
\end{lemma}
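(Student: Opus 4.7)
The plan is to exploit that every iterated commutator $c_i$, as well as every $[\beta_k,\alpha_k^i]$, has $\beta_k$ as an innermost factor and therefore restricts to a null-homotopic map on the subcomplex $A_{k+1}\vee(A^k\times A^l)$ of the product $A_{k+1}\times(A^k\times A^l)=A^{k+1}\times A^l$. Since $A_{k+1}=A$ is a co-H-space, Lemma~\ref{lemma_co-H_id} then guarantees that any two of these maps commute in $[A^{k+1}\times A^l,G]$, and that $[f\cdot g,\alpha_k]=[f,\alpha_k]\cdot[g,\alpha_k]$ whenever $f,g$ are built from the $c_i$'s. In particular, iterating the linearity yields $[c_i^m,\alpha_k]=[c_i,\alpha_k]^m=c_{i+1}^m$ for all $m\geq0$.

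First I would establish the single-index formula
\[
[\beta_k,\alpha_k^n]=\prod_{i=1}^{n}c_i^{\binom{n}{i}}
\]
by induction on $n$; the product is unambiguous since the $c_i$ commute. The base case $n=1$ is $[\beta_k,\alpha_k]=c_1$. For the inductive step, apply Proposition~\ref{identities}~\ref{id_rprod} to $\alpha_k^{n+1}=\alpha_k^n\cdot\alpha_k$ to get
\[
[\beta_k,\alpha_k^{n+1}]=[\beta_k,\alpha_k]\cdot[\beta_k,\alpha_k^n]\cdot[[\beta_k,\alpha_k^n],\alpha_k];
\]
the inductive hypothesis together with the linearity of the bracket against $\alpha_k$ rewrites the right-hand side as $c_1\cdot\prod_{i=1}^{n}c_i^{\binom{n}{i}}\cdot\prod_{i=1}^{n}c_{i+1}^{\binom{n}{i}}$, and Pascal's rule $\binom{n}{i-1}+\binom{n}{i}=\binom{n+1}{i}$ collects this into $\prod_{i=1}^{n+1}c_i^{\binom{n+1}{i}}$.

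With the single-index formula in hand, the lemma follows from
\[
\prod^{n-1}_{i=1}[\beta_k,\alpha_k^i]=\prod^{n-1}_{i=1}\prod^{i}_{j=1}c_j^{\binom{i}{j}}=\prod^{n-1}_{j=1}c_j^{\sum_{i=j}^{n-1}\binom{i}{j}}=\prod^{n-1}_{j=1}c_j^{\binom{n}{j+1}},
\]
where the reordering of the double product uses the commutativity of the $c_j$ and the final equality is the hockey-stick identity $\sum_{i=j}^{n-1}\binom{i}{j}=\binom{n}{j+1}$. The only real subtlety is verifying that the ``null on $A_{k+1}\vee(A^k\times A^l)$'' hypothesis of Lemma~\ref{lemma_co-H_id} holds at each stage---in particular for all the $c_i$, for their powers, and for any products of these used along the way---so that the linearity and commutativity properties can be applied unambiguously; once this bookkeeping is in place, the identity reduces to a routine combinatorial fact.
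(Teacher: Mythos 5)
Your proof is correct and follows essentially the same route as the paper: the same single-index induction on $[\beta_k,\alpha_k^n]$ using Proposition~\ref{identities}~\ref{id_rprod} together with the commutativity and linearity supplied by Lemma~\ref{lemma_co-H_id} (all terms containing $\beta_k$ restrict trivially to $A_{k+1}\vee(A^k\times A^l)$), followed by reordering the double product and applying $\sum_{i=j}^{n-1}\binom{i}{j}=\binom{n}{j+1}$. The only cosmetic difference is that you invoke the hockey-stick identity directly where the paper derives it from a generating-function computation.
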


\begin{proof}
First, by induction we prove
\[
[\beta_k,\alpha^i_k]=\prod^i_{j=1}c_j(\beta_k,\alpha_k,\cdots,\alpha_k)^{\binom{i}{j}}.
\]
It is trivial for $i=1$. Assume the formula holds for $[\beta_k,\alpha^i_k]$. Use the commutator identity in~(\ref{lemma_gp_id}) and inductive hypothesis to get
\begin{eqnarray*}
[\beta_k,\alpha^{i+1}_k]
&=&[\beta_k,\alpha_k]\cdot[\beta_k,\alpha^i_k]\cdot[[\beta_k,\alpha^i_k],\alpha_k]\\
&=&[\beta_k,\alpha_k]\cdot\left(\prod^i_{j=1}c_j(\beta_k,\alpha_k,\cdots,\alpha_k)^{\binom{i}{j}}\right)\cdot[\prod^i_{j=1}c_j(\beta_k,\alpha_k,\cdots,\alpha_k)^{\binom{i}{j}},\alpha_k]
\end{eqnarray*}

Since the restriction of $c_j(\beta_k,\alpha_k,\cdots,\alpha_k)$ to $A_{k+1}\vee(A^k\times A^l)$ is null-homotopic for all $j$, by Lemma~\ref{lemma_co-H_id} they commute with each other and
\begin{eqnarray*}
[\beta_k,\alpha^{i+1}_k]
&=&[\beta_k,\alpha_k]\cdot\left(\prod^i_{j=1}c_j(\beta_k,\alpha_k,\cdots,\alpha_k)^{\binom{i}{j}}\right)\cdot\left(\prod^i_{j=1}[c_j(\beta_k,\alpha_k,\cdots,\alpha_k),\alpha_k]^{\binom{i}{j}}\right)\\
&=&[\beta_k,\alpha_k]\cdot\left(\prod^i_{j=1}c_j(\beta_k,\alpha_k,\cdots,\alpha_k)^{\binom{i}{j}}\right)\cdot\left(\prod^i_{j=1}c_{j+1}(\beta_k,\alpha_k,\cdots,\alpha_k)^{\binom{i}{j}}\right)\\
&=&[\beta_k,\alpha_k]^{i+1}\cdot\left(\prod^i_{j=2}c_j(\beta_k,\alpha_k,\cdots,\alpha_k)^{\binom{i}{j}+\binom{i}{j+1}}\right)\cdot c_{i+1}(\beta_k,\alpha_k,\cdots,\alpha_k)\\
&=&\prod^{i+1}_{j=1}c_j(\beta_k,\alpha_k,\cdots,\alpha_k)^{\binom{i+1}{j}}
\end{eqnarray*}
Therefore the claim is proved.

Now we multiply all $[\beta_k,\alpha^i_k]$'s and use the commutativity of $c_j(\beta_k,\alpha_k,\cdots,\alpha_k)$'s to get
\begin{eqnarray*}
\prod^{n-1}_{i=1}[\beta_k,\alpha^i_k]
&=&\prod^{n-1}_{i=1}\prod^i_{j=1}c_j(\beta_k,\alpha_k,\cdots,\alpha_k)^{\binom{i}{j}}\\
&=&\prod^{n-1}_{j=1}\prod^{n-1}_{i=j}c_j(\beta_k,\alpha_k,\cdots,\alpha_k)^{\binom{i}{j}}\\
&=&\prod^{n-1}_{j=1}\left(\prod^{n-1}_{i=j}c_j(\beta_k,\alpha_k,\cdots,\alpha_k)^{\binom{i}{j}}\right)\\
&=&\prod^{n-1}_{j=1}c_j(\beta_k,\alpha_k,\cdots,\alpha_k)^{\sum^{n-1}_{i=j}\binom{i}{j}}
\end{eqnarray*}
The proof will be completed if we can show that $\sum^{n-1}_{i=j}\binom{i}{j}=\binom{n}{j+1}$.

Consider the polynomial
\[
\sum^{n-1}_{i=0}(1+x)^i=\sum^{n-1}_{i=0}\sum^i_{j=0}\binom{i}{j}x^j=\sum^{n-1}_{j=0}\sum^{n-1}_{i=0}\binom{i}{j}x^j.
\]
The coefficient of $x^j$ is $\sum^{n-1}_{i=j}\binom{i}{j}$. On the other hand, it can be written as
\[
1+(1+x)+\cdots+(1+x)^{n-1}=\frac{(1+x)^n-1}{x}=\sum^{n}_{j=1}\binom{n}{j}x^{j-1}.
\]
The coefficient of $x^j$ is $\binom{n}{j+1}$. By comparing the coefficients of $x^j$ the statement follows.
\end{proof}

Let $nil(G)$ be the homotopy nilpotence class of $G$, that is, $nil(G)=n$ if and only if the~$n$-iterated commutator $c_n$ is null-homotopic but $c_{n-1}$ is not. In particular, $nil(G)=n$ means all commutators in $G$ of length greater than $n$ are null-homotopic.

\begin{lemma}\label{nil_omega}
Let $\{f_j:A^k\times A^l\to G\}_{1\leq j\leq n-1}$ be a set of maps. If $nil(G)\leq n$, then~\mbox{$c_{n-1}(\beta_k,f_1,\cdots,f_{n-1})$} is null-homotopic.
\end{lemma}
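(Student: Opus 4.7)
The idea is that $\beta_k$ is pointwise a product of two iterated commutators in $G$, so placing $\beta_k$ inside an $(n-1)$-iterated commutator effectively produces iterated commutators in $G$ of length greater than $n$, all of which vanish by the hypothesis $nil(G)\leq n$ (using the fact recalled just before the statement of this lemma).

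First I would write $\beta_k=\gamma_1\cdot\gamma_2$ where $\gamma_1=[\cm{m'_k,m_l},\tilde{a}_{k+1}]$ and $\gamma_2=\cm{a_{k+1},m_l}$. Pointwise in $G$, $\gamma_2$ is a single commutator (length $2$) and $\gamma_1$ is a $2$-iterated commutator (length $3$). I would then expand $c_{n-1}(\gamma_1\gamma_2,f_1,\ldots,f_{n-1})$ as a product of iterated commutators in $[A^{k+1}\times A^l,G]$ by iteratively applying Proposition~\ref{identities}~\ref{id_lprod} in the slot originally occupied by $\beta_k$, pushing the decomposition $\beta_k=\gamma_1\cdot\gamma_2$ through the nested commutator brackets. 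Each resulting term will contain $\gamma_1$ or $\gamma_2$ (or both, inside correction terms of the form $[[a,b],a']$) in place of $\beta_k$.

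Viewed pointwise in $G$, each such term is an iterated commutator of length at least $(n-1)+2=n+1$: the $n-1$ maps $f_j$ each contribute length $1$ and the substituted $\gamma_i$ contributes length at least $2$, while the correction terms only add further length. Since the hypothesis $nil(G)\leq n$ implies that every iterated commutator in $G$ of length greater than $n$ is null-homotopic, every term in the expansion is null, and hence so is the product $c_{n-1}(\beta_k,f_1,\ldots,f_{n-1})$.

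The main obstacle will be the combinatorial bookkeeping of the expansion: repeated application of Proposition~\ref{identities}~\ref{id_lprod} inside a deep iterated commutator generates many correction terms, each of which must be identified as an iterated commutator of $\gamma_i$'s and $f_j$'s in $G$. However, since every new correction term only introduces additional commutator operations without ever removing a $\gamma_i$ factor, the length bound $\geq n+1$ is preserved throughout, and the argument will conclude.
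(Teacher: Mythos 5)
Your decomposition of $\beta_k$ into the two commutators $\gamma_1=[\cm{m'_k,m_l},\tilde{a}_{k+1}]$ and $\gamma_2=\cm{a_{k+1},m_l}$ is exactly the paper's starting point, but from there the arguments diverge. The paper never invokes the general expansion of Proposition~\ref{identities}~\ref{id_lprod}: it observes that $\gamma_1,\gamma_2$ and all subsequent commutators restrict trivially to $A_{k+1}\vee(A^k\times A^l)$, so Lemma~\ref{lemma_co-H_id} gives the correction-free bilinearity $[\gamma\cdot\gamma',f_m]=[\gamma,f_m]\cdot[\gamma',f_m]$, and an easy induction yields the exact identity $c_{n-1}(\beta_k,f_1,\cdots,f_{n-1})=\gamma_{n-1}\cdot\gamma'_{n-1}$ with precisely two terms, both \emph{left-normed} commutators of lengths $n+2$ and $n+1$, which are null since $c_n\simeq*$. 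Your route keeps all the correction terms and kills them by a weight count; this can be made to work (every term produced at stage $m$ contains the new $f_m$ and at least one $\gamma_i$, so has weight at least $m+2$, hence at least $n+1$ at the end), but it costs two things. First, the bookkeeping you acknowledge. Second, and more substantively, your correction terms such as $[\gamma_2,[\gamma_1,f_1]]$ are \emph{not} left-normed, while $nil(G)\le n$ literally only asserts that the left-normed iterated commutator $c_n$ is null. To dispose of arbitrarily bracketed commutators of weight greater than $n$ you should add the observation that $c_n\simeq*$ makes the group $[A^{k+1}\times A^l,G]$ nilpotent of class at most $n$ (every left-normed weight-$(n+1)$ commutator of maps factors through $c_n$ via a diagonal), and then cite the group-theoretic fact that in such a group all commutators of weight exceeding $n$, in any bracketing, vanish because $[\gamma_i,\gamma_j]\subseteq\gamma_{i+j}$ for the lower central series. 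With that supplement your argument is sound; in fact, once you have it, you can skip the expansion entirely: $\beta_k$ already lies in the commutator subgroup, so $c_{n-1}(\beta_k,f_1,\cdots,f_{n-1})$ sits in the $(n+1)$-st term of the lower central series and is trivial for free, without ever using the co-H-space hypothesis. The paper's version buys an explicit two-term formula at the price of Lemma~\ref{lemma_co-H_id}; yours buys generality at the price of the group-theoretic supplement.
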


\begin{proof}
By definition, $\beta_k=[\cm{m'_k,m_l},\tilde{a}_{k+1}]\cdot\cm{a_{k+1},m_l}$. Denote $[\cm{m'_k,m_l},\tilde{a}_{k+1}]$ and $\cm{a_{k+1},m_l}$ by $\gamma_0$ and $\gamma'_0$ respectively. Then we have $\beta_k=\gamma_0\cdot\gamma'_0$. For $1\leq j\leq n-1$, let $\gamma_j=[\gamma_{j-1},f_j]$ and $\gamma'_j=[\gamma'_{j-1},f_j]$. We claim that $c_m(\beta_k,f_1,\cdots,f_m)=\gamma_m\cdot\gamma'_m$ for $1\leq m\leq n-1$.

When $m=1$,
\[
c_1(\beta_k,f_1)=[\beta_k,f_1]=[\gamma_0\cdot\gamma'_0,f_1].
\]
Since the restrictions of $\gamma_0$ and $\gamma'_0$ to $A_{k+1}\vee(A^k\times A^l)$ are null-homotopic, by Lemma~\ref{lemma_co-H_id} we have
\[
[\gamma_0\cdot\gamma'_0,f_1]=[\gamma_0,f_1]\cdot[\gamma'_0,f_1]=\gamma_1\cdot\gamma'_1.
\]
Assume the claim is true for $m-1$. By the induction hypothesis,
\begin{eqnarray*}
c_m(\beta_k,f_1,\cdots,f_m)
&=&c_1\circ(c_{m-1}(\beta_k, f_1,\cdots, f_{m-1})\times f_m)\\
&=&[c_{m-1}(\beta_k, f_1,\cdots, f_{m-1}), f_m]\\
&=&[\gamma_{m-1}\cdot\gamma'_{m-1}, f_m]
\end{eqnarray*}
Since the restrictions of $\gamma_{m-1}$ and $\gamma'_{m-1}$ to $A_{k+1}\vee(A^k\times A^l)$ are null-homotopic, by Lemma~\ref{lemma_co-H_id}
\[
c_m(\beta_k,f_1,\cdots,f_m)=[\gamma_{m-1},f_m]\cdot[\gamma'_{m-1}, f_m]=\gamma_m\cdot\gamma'_m.
\]

By putting $m=n-1$ we get $c_{n-1}(\beta_k,f_1,\cdots,f_n)=\gamma_{n-1}\cdot\gamma'_{n-1}$. Notice that $\gamma_{n-1}$ and~$\gamma'_{n-1}$ are commutators of length $n+2$ and $n+1$ respectively, which are null-homotopic due to the condition on the homotopy nilpotency of $G$. Therefore $c_{n-1}(\beta_k,f_1,\cdots,f_{n-1})$ is null-homotopic.
\end{proof}

Now we have all the ingredients to prove Lemma~\ref{lemma_sm_mk_null}.

\begin{proof}[Proof of Lemma~\ref{lemma_sm_mk_null}]
Suppose $\cm{\imath,\imath}^{p^r}$ is null-homotopic and $nil(G)$ is less than $p^r+1$. We prove that $\cm{m_k, m_l}^{p^r}$ is null-homotopic for all $k$ and $l$ by induction. By Lemma~\ref{induction_1},~$\cm{m_k,\imath}^{p^r}$ and $\cm{\imath, m_l}^{p^r}$ are null-homotopic for all $k$ and $l$. Assume $\cm{m_k,m_l}^{p^r}$ is null-homotopic for some fixed $k$ and $l$. We need to show that $\cm{m_{k+1},m_l}^{p^r}$ is null-homotopic. By Lemma~\ref{alpha_k+1^n}, we have
\[
\cm{m_{k+1},m_l}^{p^r}=\alpha_k^{p^r}\cdot\beta_k^{p^r}\cdot\prod^{p^r-1}_{i=1}[\beta_k,\alpha_k^i].
\]
The factor $\alpha_k^{p^r}$ is null-homotopic due to the definition of $\alpha_k$ and hypothesis, and $\beta_k^{p^r}$ is null-homotopic by Lemma~\ref{alpha_beta}. So it remains to show that $\prod^{p^r-1}_{i=1}[\beta_k,\alpha_k^i]$ is null-homotopic.

By Lemma~\ref{prod_express},
\[
\prod^{p^r-1}_{i=1}[\beta_k,\alpha_k^i]=\prod^{p^r-1}_{j=1}c_j(\beta_k,\alpha_k,\cdots,\alpha_k)^{\binom{p^r}{j+1}}
\]
Observe that $\binom{p^r}{j+1}$ is divisible by $p^r$ for $1\leq j\leq p^r-2$. By Lemma~\ref{lemma_co-H_id} we have
\[
c_j(\beta_k^n,\alpha_k,\cdots,\alpha_k)=c_j(\beta_k,\alpha_k,\cdots,\alpha_k)^n
\]
for all $n$. In our case we have
\[
c_j(\beta_k,\alpha_k,\cdots,\alpha_k)^{\binom{p^r}{j+1}}=c_j(\beta_k^{p^r},\alpha_k,\cdots,\alpha_k)^{\binom{p^r}{j+1}/p^r}.
\]
Also, when $j=p^r-1$, the term $c_{p^r-1}(\beta_k,\alpha_k,\cdots,\alpha_k)$ is null-homotopic by Lemma~\ref{nil_omega}. Putting these together we obtain
\[
\prod^{p^r-1}_{i=1}[\beta_k,\alpha_k^i]=\prod^{p^r-2}_{j=1}c_j(\beta_k^{p^r},\alpha_k,\cdots,\alpha_k)^{\binom{p^r}{j+1}/p^r}.
\]
We have shown that $\beta_k^{p^r}$ is null-homotopic in Lemma~\ref{alpha_beta}, so $\prod^{p^r}_{i=1}[\beta_k,\alpha_k^i]$ is null-homotopic and the induction is completed.
\end{proof}

\section{Orders of Samelson products of quasi-$p$-regular groups}
In this section we apply Theorem~\ref{main} to calculate the orders of $\sm{\one_G,\one_G}$ for certain Lie groups~$G$. Recall that $G$ is rationally homotopy equivalent to a product of spheres~$\prod^l_{i=1}S^{2n_i-1}$, where $n_1\leq\cdots\leq n_l$. The sequence $(2n_1-1,\cdots,2n_l-1)$ is called the \emph{type} of $G$. After localization at $p$, $G$ is homotopy equivalent to a product of H-spaces $\prod^{p-1}_{i=1}B_i$, and $A$ is homotopy equivalent to a wedge of co-H-spaces $\bigvee^{p-1}_{i=1}A_i$ such that $A_i$ is a subspace of $B_i$. For~\mbox{$1\leq i\leq p-1$}, let~\mbox{$\imath_i:A_i\to B_i$} be the inclusion. Then $H_*(B_i)$ is the exterior algebra generated by~$(\imath_i)_*(\tilde{H}_*(A_i))$. If each~$B_i$ is a sphere, then we call $G$ \emph{$p$-regular}. If each $A_i$ is a sphere or a CW-complex with two cells, then we call $G$ \emph{quasi-$p$-regular}. When $A_i$ is a CW-complex with two cells, it is homotopy equivalent to the cofibre of $\alpha_{2n_i-1}$, which is the generator of the homotopy group~$\pi_{2n_i+2p-4}(S^{2n_i-1})$, and the corresponding $B_i$ is the~$S^{2n_i-1}$-bundle~$B(2n-1, 2n+2p-3)$ over $S^{2n_i+2p-3}$ classified by $\frac{1}{2}\alpha_{2n_i-1}$ \cite{MNT77}.

The homotopy nilpotence classes of certain quasi-$p$-regular Lie groups are known.
\begin{thm}[Kaji and Kishimoto \cite{kk10}]
A $p$-regular Lie group has homotopy nilpotence class at most 3.
\end{thm}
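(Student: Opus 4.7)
The plan is to show the $3$-iterated commutator $c_3$ is null-homotopic after $p$-localization by reducing the problem to iterated Samelson products of sphere-summand inclusions. Since $G$ is $p$-regular, the generating space $A$ is a wedge $\bigvee_{i=1}^l S^{2n_i-1}$ of odd-dimensional spheres and $\imath : A \to G$ is the inclusion; write $a_i : S^{2n_i-1} \to G$ for its restriction to the $i$-th wedge summand.

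First I would adapt the reduction machinery of Section~2---the retract $\tilde{\imath} : \Omega\Sigma A \to G$ together with the James splitting $\Sigma\Omega\Sigma A \simeq \bigvee_k \Sigma A^{\wedge k}$---to translate the statement ``$c_3 \simeq \ast$'' into the vanishing of all three-fold iterated Samelson products $\sm{a_i,\sm{a_j,a_k}} : S^{2n_i-1} \wedge S^{2n_j-1} \wedge S^{2n_k-1} \to G$. This is a one-level-higher analogue of Lemma~\ref{lemma_J_to_JkJl}: an iterated commutator on $G^4$ is trivial if and only if all iterated Samelson products of the maps $m_{k_1}, m_{k_2}, m_{k_3}$ are trivial, and these in turn factor through smashes of at most three sphere summands by the co-H-space structure of $A$ and the bilinearity recorded in Proposition~\ref{identities}.

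Second, each triple Samelson product $\sm{a_i,\sm{a_j,a_k}}$ is a homotopy class in $\pi_{2(n_i+n_j+n_k)-3}(G)_{(p)}$. Using the $p$-regular decomposition of spaces $G \simeq_{(p)} \prod_m S^{2n_m-1}$, this group splits as $\bigoplus_m \pi_{2(n_i+n_j+n_k)-3}(S^{2n_m-1})_{(p)}$, so the task reduces to showing that the projection of the triple product into each sphere summand is null-homotopic.

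The main obstacle is executing this last step, and the key input is James's theorem that every odd-dimensional sphere $S^{2n-1}_{(p)}$ occurring in the product is homotopy commutative in the $p$-regular range. Composing an iterated Samelson product with the projection $G \to S^{2n_m-1}_{(p)}$ produces an iterated Samelson product of maps into a homotopy-commutative H-space, so the inner bracket $\sm{a_j,a_k}$ becomes null after projection. Using bilinearity and the Jacobi-type identities of Proposition~\ref{identities}, one propagates this triviality to the outer bracket for every $i,j,k$, and concludes $c_3 \simeq \ast$, i.e.\ $nil(G) \leq 3$.
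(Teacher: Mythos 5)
First, note that the paper does not prove this statement at all: it is quoted verbatim from Kaji--Kishimoto \cite{kk10}, so there is no internal proof to compare against. Judged on its own terms, your proposal has a fatal gap in its final step. The key input you invoke --- that every sphere $S^{2n_m-1}_{(p)}$ occurring in the $p$-regular decomposition is homotopy commutative --- is false. Homotopy commutativity of $S^{2n-1}_{(p)}$ requires $p>2n$, whereas $p$-regularity of $G$ of type $(2n_1-1,\dots,2n_l-1)$ only guarantees $p\geq n_l$; for example $S^{2p-1}_{(p)}$, which occurs in $p$-regular $SU(p)$, is not homotopy commutative. If your argument were valid it would show that the inner bracket $\sm{a_j,a_k}$ vanishes outright, hence that $G$ is homotopy commutative ($nil(G)=1$); this contradicts McGibbon's theorem quoted in Section~4 and the paper's own Theorem~\ref{regular_bound}, which shows $\sm{\one_G,\one_G}$ has order exactly $p$ when $n_l\leq p<2n_l$. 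A proof that proves too much cannot be repaired by the same mechanism. There is also a secondary issue: with the paper's conventions, $nil(G)\leq 3$ means the $3$-iterated commutator $c_3$, a map on $G^{\times 4}$, is null, so the reduction must handle four-fold brackets $\sm{\sm{\sm{a_i,a_j},a_k},a_m}$, not the three-fold ones you consider. Finally, even where a sphere factor is homotopy commutative, the projection $G\to S^{2n_m-1}$ arising from the space-level splitting $G\simeq\prod_m S^{2n_m-1}$ need not be an H-map, so composing a Samelson product in $G$ with this projection does not a priori yield a Samelson product computed in the H-structure of the sphere.

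Your first reduction step (adapting Lemma~\ref{lemma_J_to_JkJl} and Proposition~\ref{identities} to replace $c_3$ by iterated Samelson products of the wedge-summand inclusions $a_i$, then splitting the target by the $p$-regular product decomposition) is reasonable and is in the spirit of both this paper and \cite{kk10}. What actually closes the argument in Kaji--Kishimoto is not commutativity of the factors but a degree count: a four-fold product $\sm{\sm{\sm{a_i,a_j},a_k},a_m}$ lands in $\pi_{2(n_i+n_j+n_k+n_m)-4}(S^{2n_q-1})_{(p)}$, and one checks using Toda's computations (Theorem~\ref{toda}) together with the constraints $n_q\leq p$ and properties of the mod-$p$ decomposition that all such classes vanish, while the three-fold products need not. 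Replacing your homotopy-commutativity step with this kind of explicit vanishing argument on the relevant homotopy groups is what the proof requires.
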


\begin{thm}[Kishimoto \cite{kishimoto09}]
For $p\geq7$, a quasi-$p$-regular $SU(n)$ has homotopy nilpotence class at most 3.
\end{thm}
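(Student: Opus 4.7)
The plan is to work entirely at the prime $p \geq 7$ and exploit the quasi-$p$-regular H-space decomposition $SU(n)_{(p)} \simeq \prod_{i=1}^{p-1} B_i$, in which each factor is either an odd sphere $S^{2n_i-1}$ or a sphere bundle $B(2n_i - 1, 2n_i + 2p - 3)$. Because the homotopy nilpotence class is an H-invariant and the iterated commutators on a product of H-spaces are controlled by those on the factors together with the mixed Samelson products across factors, reducing to $nil(SU(n)_{(p)}) \leq 3$ amounts to showing that every triple iterated Samelson product of maps into $SU(n)_{(p)}$ is null-homotopic.

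First I would combine the retractile structure of $SU(n)_{(p)}$ with the James splitting $\Sigma \Omega \Sigma A \simeq \bigvee_{k \geq 1} \Sigma A^{\wedge k}$ to reduce further: by an argument parallel to Lemma~\ref{k component of JA} applied iteratively in each variable, it is enough to prove that the triple Samelson product $\langle \langle \imath_i, \imath_j \rangle, \imath_k \rangle : A_i \wedge A_j \wedge A_k \to SU(n)_{(p)}$ is null-homotopic for all $i, j, k$, where $A_i \hookrightarrow B_i$ is the generating co-H-subcomplex, which has at most two cells, in dimensions $2n_i - 1$ and $2n_i + 2p - 3$.

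Next I would compute the inner Samelson product $\langle \imath_i, \imath_j \rangle$ cellularly, identifying into which H-space factor $B_\ell$ each of its components can land. The key input is that for $p \geq 7$ and in the relevant dimension range, the $p$-primary homotopy groups of $SU(n)$ are spherical in origin, concentrated on the sphere generators and their $\alpha_1$-multiples. Then, forming the outer Samelson product with $\imath_k$, in most combinations of cells the connectivity of the target exceeds the dimension of the source, so the map is null-homotopic on cellular grounds. The remaining cases, in which all three inputs are the ``upper'' bundle cells, produce classes in $p$-primary stems of $SU(n)$ controlled by iterated $\alpha_1$-brackets in $\pi_*(S^{2m-1})_{(p)}$.

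The main obstacle is exactly this last step, and the hypothesis $p \geq 7$ is what makes it work: at these primes, Toda's computation of the unstable $p$-primary stems forces the relevant triple $\alpha_1$-bracket to vanish, so the iterated bracket lands in a trivial group. The remaining bookkeeping — enumerating which triples $(i, j, k)$ of factor types can potentially yield a nontrivial class, and showing that each such case is killed either by a connectivity–dimension comparison or by the vanishing of the triple $\alpha_1$-bracket — is the bulk of the remaining work, but each individual case is routine once the framework above is in place.
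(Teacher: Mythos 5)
This statement is not proved in the paper at all: it is quoted verbatim from Kishimoto \cite{kishimoto09}, so there is no internal argument to measure yours against, and the only question is whether your outline would deliver the theorem. It would not, for two reasons. The first is an off-by-one error in the reduction. In the conventions of this paper, homotopy nilpotence class at most $3$ means the $3$-iterated commutator $c_3$, a map defined on \emph{four} copies of $G$, is null-homotopic. The maps you propose to kill, $\sm{\sm{\imath_i,\imath_j},\imath_k}:A_i\wedge A_j\wedge A_k\to SU(n)$, are components of $c_2$; if they all vanished you would have proved class at most $2$. That is stronger than the theorem and inconsistent with Kishimoto's own computations (reproduced here as Theorem~\ref{kishimoto}): for $p+1\leq j+j'\leq 2p-1$ the product $\sm{\lambda_j,\lambda_{j'}}$ is only shown to compress into $S^{2(j+j'-p)+1}\subset SU(n)$, not to vanish, and the content of the class-$3$ bound is precisely that one further bracket is needed to kill what survives at the two- and three-variable level. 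The correct targets are the four-variable products $\sm{\sm{\sm{\imath_i,\imath_j},\imath_k},\imath_m}$, with the compression statements for the inner brackets used as input.

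The second gap is that the step you describe as ``parallel to Lemma~\ref{k component of JA} applied iteratively in each variable'' is the main structural difficulty, not routine bookkeeping. Passing from the vanishing of iterated Samelson products of the generators to the vanishing of the universal iterated commutator on $G$ requires writing $\one_G$ in terms of the generators via the retraction $\Sigma G\to\Sigma A$ and then distributing commutators over the resulting products; this produces correction terms that are themselves commutators of greater length, and organizing them so that an induction closes is exactly what Section~3 of the present paper has to do for the single commutator $\sm{\one_G,\one_G}$ (and even there a nilpotency hypothesis is needed to terminate the process). An iterated-commutator version of that calculus is the technical heart of Kishimoto's proof and of \cite{kk10}, and your sketch simply assumes it. The computational raw material you invoke --- Toda's $p$-primary homotopy groups, connectivity-versus-dimension vanishing, and the behaviour of $\alpha_1$-brackets for $p\geq 7$ --- is the right input, so the plan is salvageable, but only after the reduction is set at the correct commutator length and the distribution argument is actually carried out rather than asserted.
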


\begin{thm}[Theriault \cite{theriault16}]
For $p\geq7$, a quasi-$p$-regular exceptional Lie group has homotopy nilpotence class at most 2.
\end{thm}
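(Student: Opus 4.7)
The plan is to exploit the quasi-$p$-regular decomposition of $G$ into simple H-factors and reduce the bound $\mathrm{nil}(G)\leq 2$ to a nilpotence bound on each factor. By the definition of quasi-$p$-regularity, localized at $p$ the group $G$ is homotopy equivalent to a product $\prod_{i=1}^{p-1} B_i$, where each $B_i$ is either an odd sphere $S^{2n_i-1}$ or the mod-$p$ sphere bundle $B(2n_i-1, 2n_i+2p-3)$. The first step is to promote this to an H-equivalence at $p\geq 7$, which is available for the exceptional Lie groups from the mod-$p$ H-structure theorems. Since the $n$-fold iterated commutator on a product of H-spaces with the product H-structure decomposes componentwise, one has $\mathrm{nil}(G)=\max_i \mathrm{nil}(B_i)$, so it suffices to show every factor has nilpotence class at most~$2$.

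The second step is to bound $\mathrm{nil}(B_i)$ on each type of factor. For $B_i=S^{2n-1}$, the double iterated commutator $c_2$ factors through the smash $(S^{2n-1})^{\wedge 3}=S^{6n-3}$ and represents an element of $\pi_{6n-3}(S^{2n-1})_{(p)}$. For the pairs $(n,p)$ that arise from quasi-$p$-regular exceptional groups with $p\geq 7$, this group is $p$-locally trivial by Toda's computations of odd-primary unstable stems, so $c_2\simeq *$. For the two-cell bundle $B_i=B(2n-1,2n+2p-3)$, I would combine the cofibration $S^{2n-1}\hookrightarrow B_i\to S^{2n+2p-3}$ with the retractile co-H-space $A_i$ generating $B_i$ (in the sense of the main text) to express $c_2$ in terms of Samelson products on the cells of $A_i$: the top and bottom cell contributions are controlled by the sphere calculation above, while the mixed contributions passing through the attaching $\alpha_1$-element vanish after one iteration in the dimensional range that quasi-$p$-regularity enforces.

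The main obstacle is twofold. First, I need the product decomposition to be upgraded to an H-equivalence; this requires explicit constructions of H-maps for each of $G_2, F_4, E_6, E_7, E_8$ at the appropriate primes, and is the part most sensitive to the lower bound $p\geq 7$. Second, the vanishing of the primary obstructions on the cells must be verified case by case, which amounts to showing that the relevant unstable odd-primary homotopy groups of spheres are trivial in the dimensional range dictated by the cell dimensions of the exceptional groups. Both inputs are numerical and available from the literature, but assembling them uniformly across the five exceptional families is the technical heart of the argument.
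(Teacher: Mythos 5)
This statement is quoted from Theriault \cite{theriault16} and the paper offers no proof of it; the cited argument bounds the homotopy nilpotency via cocategory and dual polyhedral products, a global method that never requires identifying the multiplication on $G$ with a product multiplication. Your proposal takes a different route, and its first step contains a genuine gap. The mod-$p$ decomposition $G\simeq\prod_{i}B_i$ of Mimura--Nishida--Toda is an equivalence of spaces, not of H-spaces, and in general it cannot be ``promoted'' to an H-equivalence with the product H-structure. If it could, then $\mathrm{nil}(G)=\max_i\mathrm{nil}(B_i)$ would indeed follow, and in particular $G$ would be homotopy commutative whenever every factor is. This already fails in the cases at hand: for $F_4$ at $p=11$ the factors are $B(3,23)$, $S^{11}$ and $S^{15}$, each of which is homotopy commutative at $11$ (the relevant obstruction groups $\pi_{22}(S^{11})$, $\pi_{30}(S^{15})$ and the cells of $B(3,23)\wedge B(3,23)$ all sit below the first $11$-torsion, as one checks from Theorem 4.4), yet McGibbon's theorem --- and the paper's own table, which records that $\sm{\one_{F_4},\one_{F_4}}$ has order $p$ at $p=11$ --- shows $F_4$ is \emph{not} homotopy commutative at $11$. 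So no H-equivalence to the product with componentwise multiplication exists, and the reduction $\mathrm{nil}(G)=\max_i\mathrm{nil}(B_i)$ is unavailable.

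What the componentwise reduction discards is precisely the content of the theorem: the mixed iterated Samelson products $\sm{\epsilon_i,\sm{\epsilon_j,\epsilon_k}}$ of generators coming from \emph{different} factors, which land in $\pi_*(G)$ rather than in $\pi_*(B_i)$ and are the terms that can fail to vanish even when each factor is highly commutative. Your second step (computing $c_2$ for a single odd sphere as an element of $\pi_{6n-3}(S^{2n-1})_{(p)}$, and handling the two-cell bundles via their cofibrations) is sound as far as it goes, but it only controls the ``diagonal'' contributions. To repair the argument along these lines you would need either a genuinely multiplicative decomposition (which does not exist here) or a systematic way to enumerate and kill all length-three brackets among the generators of $H_*(G)$ --- which is essentially the commutator-calculus machinery of Section 3 of this paper, or the cocategory bound of \cite{theriault16}, rather than a factor-by-factor check.
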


For $t=n-p+1$ and $t'=n-\frac{1}{2}p+1$, assume $G$ and $p$ are in the following list:
\begin{equation}\label{quasi_list}
\begin{array}{l l l}
SU(n)	&\simeq B(3,2p+1)\times\cdots\times B(2t-1,2n-1)\times S^{2t+1}\times\cdots\times S^{2p-1}	&p>\frac{n}{2}\\
		&\simeq S^3\times S^5\times\cdots\times S^{2n-1}	&n\leq p\leq\frac{n}{2}\\
Sp(n)	&\simeq B(3,2p+1)\times\cdots\times B(2t'-1,4n-1)\times S^{2t'+1}\times\cdots\times S^{2p-1}	&p>n\\
F_4		&\simeq B(3,15)\times B(11,23)	&p=7\\
		&\simeq B(3,23)\times S^{11}\times S^{15}	&p=11\\
		&\simeq S^3\times S^{11}\times S^{15}\times S^{23}	&p>11\\
E_6		&\simeq F_4\times S^9\times S^{17}	&p\geq7\\
E_7		&\simeq B(3, 23)\times B(15,35)\times S^{11}\times S^{19}\times S^{27}	&p=11\\
		&\simeq B(3, 27)\times B(11,35)\times S^{15}\times S^{19}\times S^{23}	&p=13\\
		&\simeq B(3, 35)\times S^{11}\times S^{15}\times S^{19}\times S^{23}\times S^{27}	&p=17\\
		&\simeq S^3\times S^{11}\times S^{15}\times S^{19}\times S^{23}\times S^{27}\times S^{35}	&p>17\\
E_8		&\simeq B(3, 23)\times B(15,35)\times B(27,47)\times B(39,59)	&p=11\\
		&\simeq B(3, 27)\times B(15,39)\times B(23,47)\times B(35,59)	&p=13\\
		&\simeq B(3, 35)\times B(15,47)\times B(27,59)\times S^{23}\times S^{39}	&p=17\\
		&\simeq B(3, 39)\times B(23,59)\times S^{15}\times S^{27}\times S^{35}\times S^{47}	&p=23\\
		&\simeq B(3, 59)\times S^{15}\times S^{23}\times S^{27}\times S^{35}\times S^{39}\times S^{47}	&p=29\\
		&\simeq S^3\times S^{15}\times S^{23}\times S^{27}\times S^{35}\times S^{39}\times S^{47}\times S^{59}	&p>29.
\end{array}
\end{equation}
By Theorem~\ref{main}, the order of $\sm{\one_G,\one_G}$ equals the order of $\sm{\imath,\imath}$ in these groups.

\subsection{Upper bounds on the orders of $\sm{\one_G,\one_G}$ for quasi-$p$-regular Lie groups}
Since $\sm{\imath,\imath}\in[A\wedge A,G]$ and
\begin{equation}\label{A_wedge_A_decomp}
[A\wedge A,G]\cong[(\bigvee^{p-1}_{i=1}A_i)\wedge(\bigvee^{p-1}_{j=1}A_j), G]\cong\prod^{p-1}_{i,j=1}[A_i\wedge A_j, G]\cong\prod^{p-1}_{i,j,k=1}[A_i\wedge A_j, B_k],
\end{equation}
the order of $\sm{\imath,\imath}$ cannot exceed the least common multiple of the orders of $[A_i\wedge A_j, B_k]$ for all $i,j$ and $k$. Let $C_{2n_i-1}$ be the cofiber of the generator $\alpha_{2n_i-1}$ of the homotopy group~$\pi_{2n_i+2p-4}(S^{2n_i-1})$. When $G$ is quasi-$p$-regular, each $A_i$ is a sphere $S^{2n_i-1}$ or $C_{2n_i-1}$, so~$A_i\wedge A_j$ is either $S^{2n_i+2n_j-2}$, $C_{2n_i+2n_j-2}$ or $C_{2i-1}\wedge C_{2n_j-1}$. In the following we consider the orders of $[A_i\wedge A_j,B_k]$ case by case.

If $A_i\wedge A_j$ is $S^{2n_i+2n_j-2}$, then $[A_i\wedge A_j, B_k]$ is $\pi_{2n_i+2n_j-2}(B_k)$. The homotopy groups of $B_k$ are known in a range.

\begin{thm}[Toda \cite{toda62}, Mimura and Toda \cite{MT70}, Kishimoto \cite{kishimoto09}]\label{toda}
Localized at $p$, we have
\[
\pi_{2n-1+k}(S^{2n-1})\cong\begin{cases}
\Z/p\Z	&\text{for $k=2i(p-1)-1$, $1\leq i\leq p-1$}\\
\Z/p\Z	&\text{for $k=2i(p-1)-2$, $n\leq i\leq p-1$}\\
0		&\text{other cases for $1\leq k\leq 2p(p-1)-3$},
\end{cases}
\]
\[
\pi_{2n-1+k}(B(3, 2p+1))\cong\begin{cases}
\Z/p\Z	&\text{for $k=2i(p-1)-1$, $2\leq i\leq p-1$}\\
\Z		&\text{for $k=2p-2$}\\
0		&\text{other cases for $1\leq k\leq 2p(p-1)-3$},
\end{cases}
\]
and
\[
\pi_{2n-1+k}(B(2n-1, 2n+2p-3))\cong\begin{cases}
\Z/p^2\Z	&\text{for $k=2i(p-1)-1$, $2\leq i\leq p-1$}\\
\Z/p\Z		&\text{for $k=2i(p-1)-2$, $n\leq i\leq p-1$}\\
\Z			&\text{for $k=2p-2$}\\
0			&\text{other cases for $1\leq k\leq 2p(p-1)-3$}.
\end{cases}
\]
\end{thm}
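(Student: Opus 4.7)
The plan is to handle the three formulas in sequence, since the two bundle cases can be extracted from the sphere case via the long exact sequences of their defining sphere-bundle fibrations. The genuine content is concentrated in (i)~computing $\pi_{*}(S^{2n-1})$ localised at $p$ in the metastable range, and (ii)~resolving one non-trivial extension problem for $B(2n-1, 2n+2p-3)$.

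For $\pi_{2n-1+k}(S^{2n-1})_{(p)}$ in the range $1 \le k \le 2p(p-1) - 3$, my plan is to invoke Toda's classical description of the $p$-primary unstable stems via the $\alpha$-family. At an odd prime, Serre's splitting $\Omega S^{2n} \simeq_{(p)} S^{2n-1} \times \Omega S^{4n-1}$ together with the EHP machinery reduces the computation to the $p$-primary part of the relevant stable stems, which in this range is generated by $\alpha_i \in \pi^s_{2i(p-1)-1}$ of order $p$ for $1 \le i \le p-1$. These give the $\Z/p\Z$'s in degree $k = 2i(p-1)-1$. The second family, supported in degree $k = 2i(p-1)-2$ only for $i \ge n$, consists of unstable classes detected by the James--Hopf invariant: they suspend to zero but remain non-trivial in $S^{2n-1}$ precisely in the stated range of $i$.

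With $\pi_*(S^{2n-1})_{(p)}$ in hand, I would read off the other two formulas from the long exact sequence of the principal fibration
\[
S^{2n-1} \longrightarrow B(2n-1, 2n+2p-3) \longrightarrow S^{2n+2p-3},
\]
whose connecting homomorphism $\partial$ is composition with the classifying map $\tfrac{1}{2}\alpha \in \pi_{2n+2p-4}(S^{2n-1}) \cong \Z/p\Z$. In most degrees one of the flanking groups vanishes and the answer is forced; in particular the $\Z$ in degree $2p-2$ comes from $\pi_{2n+2p-3}(S^{2n+2p-3}) \cong \Z$ cut down by the image $p\Z$ of $\partial$. The case $B(3, 2p+1)$ is the specialisation $n = 2$, where the lower dimension removes the second $\Z/p\Z$ family of unstable James--Hopf classes.

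The main obstacle is the $\Z/p^2\Z$ appearing for $B(2n-1, 2n+2p-3)$ in degree $k = 2i(p-1)-1$ with $2 \le i \le p-1$. Here the long exact sequence only yields a short exact sequence $0 \to \Z/p\Z \to \pi_{*}(B) \to \Z/p\Z \to 0$, and one must show it is the non-split extension. The plan is to detect non-splitting via the multiplicative structure of the $\alpha$-family in the stable stems: the relation $\alpha_1 \cdot \alpha_{i-1} = \lambda\,\alpha_i$ for a unit $\lambda \in (\Z/p\Z)^{\times}$ shows that composition with the classifying map $\alpha_1$ carries a non-trivial Bockstein, forcing the extension to be $\Z/p^2\Z$ rather than $\Z/p\Z \oplus \Z/p\Z$. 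Equivalently, one could model $B$ as a two-cell-over-two-cell complex and read the extension off a mod-$p^2$ Bockstein computation in integral cohomology.
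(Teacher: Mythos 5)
The paper gives no proof of this statement---it is quoted from Toda, Mimura--Toda and Kishimoto---so your sketch can only be measured against the arguments in those references. Your architecture (Toda's tables for $\pi_*(S^{2n-1})$ localized at $p$ in the range $k\leq 2p(p-1)-3$, then the long exact sequence of the fibration $S^{2n-1}\to B(2n-1,2n+2p-3)\to S^{2n+2p-3}$ with connecting map essentially ``compose with $\tfrac{1}{2}\alpha_1$'') is the standard route. But the step you identify as the crux is wrong. The relation $\alpha_1\cdot\alpha_{i-1}=\lambda\,\alpha_i$ cannot hold: the composite $\alpha_1\alpha_{i-1}$ lies in stem $2(p-1)-1+2(i-1)(p-1)-1=2i(p-1)-2$, while $\alpha_i$ lies in stem $2i(p-1)-1$. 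Worse, in the range at hand the $p$-primary stable stems are concentrated in the odd degrees $2i(p-1)-1$ (the first even-degree class, $\beta_1$, appears only in stem $2p(p-1)-2$), so every composition product $\alpha_j\alpha_{j'}$ vanishes stably. The non-splitness of $0\to\Z/p\Z\{\alpha_i\}\to\pi_*(B)\to\Z/p\Z\{\alpha_{i-1}\}\to 0$ is instead detected by the Toda bracket relation $\alpha_i\in\langle\alpha_1,p,\alpha_{i-1}\rangle$ (up to a unit and reordering): a lift $\widetilde{\alpha}_{i-1}$ of $\alpha_{i-1}$ to $B$ satisfies $p\,\widetilde{\alpha}_{i-1}=j_*(\lambda\alpha_i)\neq0$ because multiplying by $p$ after lifting across the attaching map $\tfrac{1}{2}\alpha_1$ realizes exactly that bracket. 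Equivalently one can use the $e$-invariant or the $v_1$-self-map of the mod-$p$ Moore space; a mod-$p^2$ Bockstein in the integral cohomology of $B$ does not see this homotopy-theoretic extension.

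Second, your uniform treatment cannot reproduce the stated answer for $B(3,2p+1)$, and your explanation of that case is backwards. On $S^3$ the ``second family'' is largest, not absent: $\pi_{3+2i(p-1)-2}(S^3)\cong\Z/p\Z$ for all $2\leq i\leq p-1$, and its generator is precisely the unstable composite $\alpha_1(3)\circ\alpha_{i-1}$, which is nonzero on $S^3$ by Toda. Hence the connecting map $\partial=\tfrac{1}{2}\alpha_1\circ(-)$ is injective on $\Z/p\Z\{\alpha_{i-1}\}$, which simultaneously kills the even-degree family in $\pi_*(B(3,2p+1))$ and truncates the would-be extension in degree $2i(p-1)-1$ down to $\Z/p\Z$---note the second displayed formula has $\Z/p\Z$ there, not $\Z/p^2\Z$, which your argument would predict. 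For $n\geq3$ the corresponding composites $\alpha_1(2n-1)\circ\alpha_{i-1}$ vanish, so $\partial=0$, the even-degree classes survive, and the bracket argument above yields $\Z/p^2\Z$. Distinguishing these two behaviours is the genuinely unstable input your sketch is missing, and it is exactly where the cited references do real work.
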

\noindent
Since $2n_i+2n_j-2$ is even, $\pi_{2n_i+2n_j-2}(B_k)$ is isomorphic to either $0$, $\Z/p\Z$ or $\Z/p^2\Z$. Therefore the order of $[A_i\wedge A_j, B_k]$ is at most $p^2$.

If $A_i\wedge A_j$ is $C_{2n_i+2n_j-2}$, then the cofibration
\[
S^{2n_i+2n_j-2}\to C_{2n_i+2n_j-2}\to S^{2n_i+2n_j+2p-4}
\]
induces an exact sequence
\begin{equation}\label{dgm_exp_SES_SC}
\pi_{2n_i+2n_j+2p-4}(B_k)\to[C_{2n_i+2n_j-2}, B_k]\to\pi_{2n_i+2n_j-2}(B_k).
\end{equation}
Since $C_{2n_i+2n_j-2}$ is a suspension and $B_k$ is an H-space, the three groups are abelian. By Theorem~\ref{toda}, the first and the last homotopy groups have orders at most $p^2$, so the order of~$[C_{2n_i+2n_j-2},B_k]$ is at most $p^4$.

If $A_i\wedge A_j$ is $C_{2n_i-1}\wedge C_{2n_j-1}$, then it is a CW-complex with one cell of dimension~\mbox{$2n_i+2n_j-2$}, two cells of dimension $2n_i+2n_j+2p-4$ and one cell of dimension~\mbox{$2n_i+2n_j+4p-6$}. Let~$C'$ be the $(2n_i+2n_j+4p-7)$-skeleton of $C_{2n_i-1}\wedge C_{2n_j-1}$, that is, $C_{2n_i-1}\wedge C_{2n_j-1}$ minus the top cell. Then the cofibration $C'\to C_{2n_i-1}\wedge C_{2n_j-1}\to S^{2n_i+2n_j+4p-6}$ induces an exact sequence of abelian groups
\[
\pi_{2n_i+2n_j+4p-6}(B_k)\longrightarrow[C_{2n_i-1}\wedge C_{2n_j-1},B_k]\longrightarrow[C',B_k].
\]
According to \cite{gray98}, $C'$ is homotopy equivalent to $C_{2n_i+2n_j-2}\vee S^{2n_i+2n_j+2p-4}$, so we have
\[
[C', B_k]\cong[C_{2n_i+2n_j-2}, B_k]\oplus\pi_{2n_i+2n_j+2p-4}(B_k).
\]
We have shown that $[C_{2n_i+2n_j-2}, B_k]$ has order at most $p^4$. By Theorem~\ref{toda}, $\pi_{2n_i+2n_j+2p-4}(B_k)$ and $\pi_{2n_i+2n_j+4p-6}(B_k)$ have orders at most $p^2$. Therefore the order of $[C_{2n_i-1}\wedge C_{2n_j-1}, B_k]$ is at most~$p^6$.

Summarizing the above discussion, we have the following proposition.

\begin{prop}\label{bound_general}
Let $G$ and $p$ be in (\ref{quasi_list}). Then the order of $\sm{\one_G,\one_G}$ is at most $p^6$.
\end{prop}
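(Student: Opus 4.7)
The plan is to combine Theorem~\ref{main} with the case-by-case bounds on $[A_i\wedge A_j, B_k]$ that appear immediately before the statement. First I would verify the hypothesis of Theorem~\ref{main}: for every pair $(G,p)$ appearing in (\ref{quasi_list}), the homotopy nilpotence class of the $p$-localization of $G$ is at most $3$ by the theorems of Kaji--Kishimoto, Kishimoto, and Theriault cited just above. Since every $p$ in the list satisfies $p\geq 5$, we have $p^6+1\geq 5^6+1>3$, so Theorem~\ref{main} applies with enormous room to spare and reduces the problem to showing that $\sm{\imath,\imath}$ has order dividing $p^6$.

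Next I would use the wedge decomposition $A\simeq\bigvee_{i=1}^{p-1}A_i$ and the identification
\[
[A\wedge A,G]\cong\prod_{i,j,k=1}^{p-1}[A_i\wedge A_j,B_k]
\]
recorded in (\ref{A_wedge_A_decomp}). Under this product decomposition $\sm{\imath,\imath}$ corresponds to a tuple of component maps, so its order is the least common multiple of the orders of those components. It therefore suffices to prove that each factor $[A_i\wedge A_j,B_k]$ has exponent dividing $p^6$.

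The remaining work is the case split on $A_i\wedge A_j$ according to whether each $A_i$ is a sphere $S^{2n_i-1}$ or a two-cell complex $C_{2n_i-1}$, which has already been carried out in full in the paragraphs preceding the proposition. The three cases produce bounds $p^2$ (sphere$\wedge$sphere, directly from Theorem~\ref{toda}), $p^4$ (sphere$\wedge C$, from the cofibre sequence in (\ref{dgm_exp_SES_SC}) and Theorem~\ref{toda}), and $p^6$ ($C\wedge C$, from the Gray splitting $C'\simeq C_{2n_i+2n_j-2}\vee S^{2n_i+2n_j+2p-4}$ combined with the cofibre sequence $C'\to C_{2n_i-1}\wedge C_{2n_j-1}\to S^{2n_i+2n_j+4p-6}$ and Theorem~\ref{toda}). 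Taking the maximum over the three cases gives the uniform exponent bound $p^6$, and together with Theorem~\ref{main} this yields the proposition.

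The only real technical point is the $C\wedge C$ case, which required iterating a cofibre sequence and invoking Gray's splitting of the subskeleton $C'$; once those two ingredients are available, the bound $p^6$ follows from an extension argument using the homotopy groups in Theorem~\ref{toda}. No new ideas beyond assembling these pieces are needed, so the proof I would write is essentially a short assembly step: check the nilpotence hypothesis, invoke the decomposition, and cite the worst of the three case bounds already in hand.
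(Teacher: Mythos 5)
Your proposal is correct and follows essentially the same route as the paper: invoke Theorem~\ref{main} (justified by the cited nilpotency results, which apply with $r=6$ since the nilpotence class is at most $3<p^6+1$), reduce to $\sm{\imath,\imath}$ via the decomposition (\ref{A_wedge_A_decomp}), and take the worst of the three case bounds $p^2$, $p^4$, $p^6$ obtained from Theorem~\ref{toda}, the cofibration (\ref{dgm_exp_SES_SC}), and Gray's splitting of $C'$. This is exactly the paper's argument, so nothing further is needed.
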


This gives a very rough upper bound on the orders of $\sm{\one_G,\one_G}$. We can sharpen the range by refining our calculation according to individual cases of $G$ and $p$.

\subsubsection*{Case I: $G$ is $p$-regular}
Suppose $G$ is $p$-regular. Then $B_i=A_i=S^{2n_i-1}$ and $p\geq n_l$. All summands $[A_i\wedge A_j,B_k]$ in~(\ref{A_wedge_A_decomp}) are homotopy groups $\pi_{2n_i+2n_j-2}(S^{2n_k-1})$. According to Theorem~\ref{toda}, their orders are at most $p$ since
\[
2(n_i+n_j-n_k)-1\leq2(2n_l-2)-1\leq2p(p-1)-3
\]
for all $i,j$ and $k$. Therefore the order of $\sm{\imath,\imath}$ is at most $p$ and so is the order of $\sm{\one_G,\one_G}$ by Theorem~\ref{main}. McGibbon \cite{mcgibbon84} showed that $G$ is homotopy commutative if and only if either~$p>2n_l$, or $(G, p)$ is $(Sp(2), 3)$ or $(G_2, 5)$. Therefore we have the following statement.

\begin{thm}\label{regular_bound}
Let $G$ be a $p$-regular Lie group of type $(2n_1-1,\cdots,2n_l-1)$. Then the order of $\sm{\one_G,\one_G}$ is $p$ if $n_l\leq p<2n_l$, and is 1 if $p>2n_l$.
\end{thm}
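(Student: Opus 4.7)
The plan is to combine Theorem~\ref{main} with Toda's computation of homotopy groups of spheres and McGibbon's classification of homotopy commutative Lie groups. Since $G$ is $p$-regular, Kaji and Kishimoto's theorem gives $nil(G)\leq 3 < p+1$ for every odd prime $p$, so Theorem~\ref{main} applies with $r=1$ and the order of $\sm{\one_G,\one_G}$ coincides with the order of $\sm{\imath,\imath}$. This reduces the problem to estimating the order of a single element in $[A\wedge A,G]$.

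To bound the order of $\sm{\imath,\imath}$ from above, I would use the decomposition (\ref{A_wedge_A_decomp}). In the $p$-regular case every $A_i$ and every $B_k$ is a sphere, so $[A\wedge A,G]$ is a finite product of the homotopy groups $\pi_{2n_i+2n_j-2}(S^{2n_k-1})$. The $p$-regularity hypothesis $p\geq n_l$ forces each degree $2(n_i+n_j-n_k)-1$ to lie in the Toda range $[1,\,2p(p-1)-3]$, which reduces to the elementary inequality $2n_l\leq p(p-1)$. Theorem~\ref{toda} then guarantees that every such homotopy group has order at most $p$, so the order of $\sm{\imath,\imath}$, and hence that of $\sm{\one_G,\one_G}$, divides $p$.

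For the exact value I would invoke McGibbon's theorem: $G$ is homotopy commutative at $p$ precisely when $p>2n_l$, or $(G,p)$ is $(Sp(2),3)$ or $(G_2,5)$. Both sporadic pairs satisfy $p<n_l$ and so are excluded from the $p$-regular setting. Hence, if $p>2n_l$ the map $\sm{\one_G,\one_G}$ is null-homotopic and its order equals $1$, while if $n_l\leq p<2n_l$ (a non-empty range since $p$ is odd and $2n_l$ is even) the group $G$ fails to be homotopy commutative, so $\sm{\one_G,\one_G}$ is non-trivial, and its order, dividing the prime $p$, must equal $p$. The only technical point is the range check $2n_l\leq p(p-1)$, which is straightforward from $p\geq n_l$ together with $p\geq 3$; I do not expect any substantive obstacle beyond this bookkeeping.
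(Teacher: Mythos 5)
Your proposal is correct and follows essentially the same route as the paper: reduce to $\sm{\imath,\imath}$ via Theorem~\ref{main} (justified by Kaji--Kishimoto's nilpotency bound), bound its order by $p$ using the decomposition~(\ref{A_wedge_A_decomp}) and Theorem~\ref{toda}, and then decide between order $1$ and order $p$ with McGibbon's homotopy commutativity criterion, noting that the sporadic pairs $(Sp(2),3)$ and $(G_2,5)$ are not $p$-regular. The range verification $2(n_i+n_j-n_k)-1\leq 2p(p-1)-3$ is exactly the paper's bookkeeping step.
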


\subsubsection*{Case II: $G$ is a quasi-$p$-regular $SU(n)$ and $p\geq7$}
Suppose $G=SU(n)$ is quasi-$p$-regular and $p\geq7$. Then $n_i=i+1$ and $p>\frac{n}{2}$. Let~\mbox{$t=n-p+1$} and $2\leq t\leq p$. Localized at $p$, there are homotopy equivalences
\[
SU(n)\simeq B(3,2p+1)\times\cdots\times B(2t-1,2n-1)\times S^{2t+1}\times\cdots\times S^{2p-1}
\]
and
\[
A\simeq C_3\vee\cdots\vee C_{2t-1}\vee S^{2t+1}\vee\cdots\vee S^{2p-1}
\]
For $1\leq j\leq t$ and $t+1\leq i\leq p$, let $\epsilon_i$ and $\lambda_i$ be the compositions
\[
\begin{array}{c c c}
\epsilon_i:S^{2i-1}\hookrightarrow A\overset{\imath}{\to}G
&\text{and}
&\lambda_j:C_{2j-1}\hookrightarrow A\overset{\imath}{\to}G.
\end{array}
\]
Kishimoto calculated some of their Samelson products in \cite{kishimoto09}.

\begin{thm}[Kishimoto \cite{kishimoto09}]\label{kishimoto}
Let $G$ be a quasi-$p$-regular $SU(n)$. For $2\leq j,j'\leq t$ and~\mbox{$t+1\leq i,i'\leq p$},
\begin{enumerate}
\item	the order of $\sm{\epsilon_i,\epsilon_{i'}}$ is at most $p$;
\item	if $i\neq p$ and $j\neq t$, then the order of $\sm{\epsilon_i,\lambda_j}$ is at most $p$;
\item	if $j+j'\leq p$, then $\sm{\lambda_j,\lambda_{j'}}$ is null-homotopic;
\item	if $p+1\leq j+j'\leq 2p-1$, then $\sm{\lambda_j,\lambda_{j'}}$ can be compressed into $S^{2(j+j'-p)+1}\subset SU(n)$.
\end{enumerate}
\end{thm}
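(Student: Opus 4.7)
The plan is to reduce each Samelson product to its coordinate components under the $p$-local splitting $G\simeq\prod_{k=1}^{p-1}B_k$, then invoke Toda's homotopy group computations (Theorem~\ref{toda}) factor by factor. Projection onto $B_k$ commutes with Samelson products, so $\sm{f,g}$ is determined by the collection of projected Samelson products with target $B_k$, and each of these lies in $[X,B_k]$ where $X$ is a smash product of spheres and/or cofibres $C_{2j-1}$. The sphere factors $B_k=S^{2n_k-1}$ are handled directly by Toda; the bundle factors $B_k=B(2n_k-1,2n_k+2p-3)$ will be handled using their defining fibrations $S^{2n_k-1}\to B_k\to S^{2n_k+2p-3}$.

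For part (1), $\sm{\epsilon_i,\epsilon_{i'}}\in\pi_{2i+2i'-2}(G)$ decomposes as a tuple of classes in $\pi_{2i+2i'-2}(B_k)$. With $t+1\leq i,i'\leq p$, the target dimension lies well within Toda's range and each summand is cyclic of order at most $p^2$. To sharpen the bound to $p$, I would use the fact that $\epsilon_i$ and $\epsilon_{i'}$ both originate from sphere summands of $A$; their Samelson product therefore factors through the bottom sphere $S^{2n_k-1}$ of each bundle factor $B_k$, meeting only the $\Z/p\Z$ subgroup of $\pi_\ast(B_k)$ rather than the $\Z/p^2\Z$ piece that can appear in Toda's table.

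For part (2), the cofibre sequence $S^{2i+2j+2p-5}\to S^{2i-1}\wedge C_{2j-1}\to S^{2i+2j-2}$ induces
\[
\pi_{2i+2j+2p-4}(B_k)\to[S^{2i-1}\wedge C_{2j-1},B_k]\to\pi_{2i+2j-2}(B_k),
\]
and the exclusion $i\neq p$, $j\neq t$ keeps both end dimensions within the ``order-$p$'' entries of Theorem~\ref{toda}, producing the desired bound. Part (3) is a dimension-counting argument: for $j+j'\leq p$ the cells of $C_{2j-1}\wedge C_{2j'-1}$ lie in dimensions $2(j+j')-2$, $2(j+j')+2p-4$ (two cells), and $2(j+j')+4p-6$, and one checks via Theorem~\ref{toda} that each component $[C_{2j-1}\wedge C_{2j'-1},B_k]$ vanishes in the allowed range. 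For part (4), with $p+1\leq j+j'\leq 2p-1$, the bottom cell dimension $2(j+j')-2$ matches non-trivial entries of Theorem~\ref{toda} only in the sphere factor $S^{2(j+j'-p)+1}$, so after verifying the vanishing of the projections to every other factor, the Samelson product lifts into that sphere.

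The main obstacle is part (4): vanishing of the other projections only guarantees that $\sm{\lambda_j,\lambda_{j'}}$ lifts into the total fibre of a product of projections, not that it compresses strictly into the single sphere $S^{2(j+j'-p)+1}$. Producing an honest compression requires controlling the attaching maps of $C_{2j-1}\wedge C_{2j'-1}$, most likely via a Toda-bracket or secondary-operation argument, to show that the higher cells collapse under the relevant coordinate map. This is the step where the detailed cell structure of the quasi-$p$-regular decomposition of $SU(n)$ genuinely enters, and it is where I would expect the bulk of the technical work to sit.
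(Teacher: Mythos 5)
First, a point of orientation: the paper does not prove this statement at all --- it is quoted verbatim from Kishimoto \cite{kishimoto09} and used as a black box, so there is no internal proof to compare against. Your proposal must therefore stand on its own, and as it stands it has genuine gaps in parts (1)--(3) beyond the one you flag in part (4). The common failure mode is that Toda's table plus cofibration exact sequences only bound the \emph{ambient groups} $[A_i\wedge A_j,B_k]$, and those bounds are strictly weaker than what the theorem asserts. Concretely: in part (1) the coordinate of $\sm{\epsilon_i,\epsilon_{i'}}$ in a bundle factor $B_k=B(2n_k-1,2n_k+2p-3)$ lives in a group that can be $\Z/p^2\Z$, and your sharpening to order $p$ rests on the claim that the Samelson product ``factors through the bottom sphere $S^{2n_k-1}$'' --- but the $\Z/p^2\Z$ classes in $\pi_*(B(2n_k-1,2n_k+2p-3))$ are exactly the ones that do \emph{not} come from the fibre sphere (they are nontrivial extensions over the base sphere), and you give no reason the projected Samelson product avoids them. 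In part (2) the exact sequence you write down has, in general, both end groups nonzero (the two cell dimensions of $S^{2i-1}\wedge C_{2j-1}$ differ by $2p-2=2(p-1)$, which is precisely the periodicity of the nonzero entries in Toda's table), so exactness yields order at most $p^2$, not $p$. Indeed the paper itself runs exactly this exact-sequence argument in its Case II discussion and only obtains $p^2$ and $p^3$ bounds; the point of citing Kishimoto is that the sharper statements cannot be extracted this way.

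Part (3) fails for the same reason in a starker form: when $j+j'=n_k$ the middle cells of $C_{2j-1}\wedge C_{2j'-1}$ sit in dimension $2n_k-1+2(p-1)-1$, where $\pi_*(B_k)\cong\Z/p\Z$ (the $\alpha_1$ family), so the groups $[C_{2j-1}\wedge C_{2j'-1},B_k]$ do \emph{not} all vanish for $j+j'\leq p$ and no dimension count will show the Samelson product is zero. What is missing throughout is an identification of which \emph{element} of these groups the Samelson product actually is. Kishimoto's argument does this by computing in unstable K-theory: he identifies $[\Sigma A\wedge A, SU(n)]$-type groups with quotients of $\tilde{K}$-groups, uses the classical formula for Samelson products in $U(n)$ in terms of the multiplication of Bott generators (Chern character data), and reads off the orders and the compression in (4) from explicit generators. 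That elementwise computation is the essential ingredient your outline does not supply, and it is also what resolves the lifting-versus-compression issue you correctly identify in part (4).
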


Using these results we can give a bound for the order of $\sm{\one_G,\one_G}$.

\begin{thm}
For $G=SU(n)$ and $p\geq7$, let the order of $\sm{\one_G,\one_G}$ be $p^r$.
\begin{itemize}
\item	If $n>2p$, then $r=0$;
\item	If $n\leq p< 2p$, then $r=1$;
\item	If $\frac{2}{3}n+1\leq p<n$, then $1\leq r\leq2$;
\item	If $\frac{n}{2}<p\leq \frac{2}{3}n$ and $n\neq2p-1$, then $1\leq r\leq3$;
\item	If $n=2p-1$, then $1\leq r\leq6$.
\end{itemize}
\end{thm}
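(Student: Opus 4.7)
The plan is to invoke Theorem~\ref{main} to reduce the computation to the order of $\sm{\imath,\imath}$, then to decompose $\sm{\imath,\imath}$ according to the wedge decomposition $A \simeq \bigvee_{j=2}^{t} C_{2j-1} \vee \bigvee_{i=t+1}^{p} S^{2i-1}$ (with $t = n-p+1$) and bound the pieces using Kishimoto's explicit calculations (Theorem~\ref{kishimoto}). Kishimoto's bound of $3$ on the homotopy nilpotence class of any quasi-$p$-regular $SU(n)$ for $p\geq 7$ gives $p^r + 1 \geq 8 > 3$ for all $r \geq 1$, so the hypothesis of Theorem~\ref{main} is met and the order of $\sm{\one_G,\one_G}$ coincides with that of $\sm{\imath,\imath}$ throughout this range. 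McGibbon's criterion supplies both the borderline case $r=0$ (when $p > 2n$ and $G$ is homotopy commutative) and the lower bound $r \geq 1$ whenever $p \leq 2n$; the $p$-regular range $n \leq p < 2n$ is then settled by Theorem~\ref{regular_bound}.

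For the quasi-$p$-regular sub-ranges, the identification~(\ref{A_wedge_A_decomp}) shows that the order of $\sm{\imath,\imath}$ equals the maximum of the orders of the components $\sm{\epsilon_i,\epsilon_{i'}}$, $\sm{\epsilon_i,\lambda_j}$ and $\sm{\lambda_j,\lambda_{j'}}$. Theorem~\ref{kishimoto} bounds nearly all of these by $p$, and the varying upper bound on $r$ counts how many families of Samelson products fall outside Kishimoto's clean ranges:
\begin{itemize}
\item	When $\tfrac{2}{3}n+1 \leq p < n$, a short computation gives $t \leq \tfrac{p-1}{2}$, so every pair $(j,j')$ satisfies $j+j' \leq p-1$ and Kishimoto~(3) annihilates $\sm{\lambda_j,\lambda_{j'}}$. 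Only the exceptional families $(\epsilon_p,\lambda_j)$ and $(\epsilon_i,\lambda_t)$ are not covered by Kishimoto~(2); each contributes at most one additional factor of $p$ via the long exact sequence of the defining cofibration $S^{2j-1} \to C_{2j-1} \to S^{2j+2p-3}$ together with Theorem~\ref{toda}. Hence $r \leq 2$.
\item	When $\tfrac{n}{2} < p \leq \tfrac{2}{3}n$ and $n \neq 2p-1$, one has $t \leq p-1$ and the $\lambda$-$\lambda$ pairs can satisfy $p+1 \leq j+j' \leq 2p-1$; Kishimoto~(4) then factors $\sm{\lambda_j,\lambda_{j'}}$ through some $S^{2(j+j'-p)+1} \subset SU(n)$, and the relevant homotopy groups of spheres from Theorem~\ref{toda} contribute at most another factor of $p$. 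Combined with the exceptional sphere-cofibre pairs as above, this gives $r \leq 3$.
\item	When $n = 2p-1$, we have $t=p$ and $A = \bigvee_{j=2}^{p} C_{2j-1}$; the pair $(\lambda_p,\lambda_p)$ has $j+j' = 2p$, which lies \emph{outside} Kishimoto's explicit range. For this single pair we must fall back on the generic CW-cellular bound from Proposition~\ref{bound_general}, capping the order of $[C_{2p-1}\wedge C_{2p-1},G]$ at $p^6$. This yields $r \leq 6$.
\end{itemize}

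The main obstacle will be the $n=2p-1$ case: no sharper information about $\sm{\lambda_p,\lambda_p}$ is available from the results assembled in this paper, and the bound $p^6$ is essentially the worst-case cellular estimate with nothing to reduce it. Sharpening it would demand a cell-by-cell analysis of how $\sm{\lambda_p,\lambda_p}$ interacts with the top cells of each H-space factor $B(2j-1,2j+2p-3)$ of $G$ — a calculation finer than anything in Kishimoto's work and which the present framework does not attempt.
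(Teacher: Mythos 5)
Your proposal is correct and follows essentially the same route as the paper: reduce to $\sm{\imath,\imath}$ via Theorem~\ref{main} and Kishimoto's nilpotency bound, handle the $p$-regular range by Theorem~\ref{regular_bound} and McGibbon, split $\sm{\imath,\imath}$ over the wedge decomposition of $A$ and bound the $\epsilon$--$\epsilon$, $\epsilon$--$\lambda$ and $\lambda$--$\lambda$ components using Theorems~\ref{kishimoto} and~\ref{toda}, and fall back on the generic bound of Proposition~\ref{bound_general} for $\sm{\lambda_p,\lambda_p}$ when $n=2p-1$. The only real difference is one of detail: the paper carries out the Toda-group bookkeeping for the exceptional pairs explicitly via the cofibration exact sequences for $C_{2i+2j-2}$ and $C_{2j-1}\wedge C_{2j'-1}$ (which is where the bounds $p^2$ and $p^3$ actually come from), whereas you assert these contributions without the dimension count.
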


\begin{proof}
When $p\geq n$, $G$ is $p$-regular and we have shown the first two statements in Theorem~\ref{regular_bound}. Assume $\frac{n}{2}<p<n$. By Theorem~\ref{main}, the order of $\sm{\one_G,\one_G}$ equals the order of~$\sm{\imath,\imath}$. Since $\sm{\imath,\imath}$ is a wedge of Samelson products of $\epsilon_i$'s and $\lambda_j$'s, we need to consider the orders of $\sm{\epsilon_i,\epsilon_{i'}},\sm{\epsilon_i,\lambda_j}$ and $\sm{\lambda_j,\lambda_{j'}}$.

First, the first two statements of Theorem~\ref{kishimoto} imply that the orders of $\sm{\epsilon_i,\epsilon_{i'}}$ and $\sm{\epsilon_i,\lambda_j}$ are at most $p$ except for $\sm{\epsilon_p,\lambda_t}$. Put $n_i=p$ and $n_j=t$ in (\ref{dgm_exp_SES_SC}) to obtain the exact sequence
\[
\pi_{4p+2t-4}(B_k)\to[C_{2p+2t-2}, B_k]\to\pi_{2p+2t-2}(B_k)
\]
where $2\leq k\leq p$. According to Theorem~\ref{toda}, the two homotopy groups are trivial except for $k=t+1$ or $t=p$ and $k=2$. In the first case, $B_k$ is $S^{2t+1}$, and $\pi_{4p+2t-4}(S^{2t+1})$ and~\mbox{$\pi_{2p+2t-2}(S^{2t+1})$} are $\Z/p\Z$. In the second case, $B_k$ is $B(3,2p+1)$, and $\pi_{6p-4}(B(3,2p+1))$ and $\pi_{4p-2}(B(3,2p+1))$ are $\Z/p^2\Z$. By exactness the order of $[C_{2p+2t-2}, B_k]$ is at most~$p^2$ for~$2\leq k\leq p$ and $n\neq 2p-1$, and consequently so is the order of $\sm{\epsilon_i,\lambda_j}$.

Second, the third statement of Theorem~\ref{kishimoto} implies that $\sm{\lambda_j,\lambda_{j'}}$ is null-homotopic for $j+j'\leq p-1$. When $p\geq\frac{2}{3}n+1$, we have
\[
\begin{array}{c c c}
n\leq\frac{3}{2}(p-1)
&\text{and}
&t=n-p+1\leq\frac{1}{2}(p-1).
\end{array}
\]
In this case the order of $\sm{\lambda_j,\lambda_{j'}}$ is always 1 since $j+j'\leq2t\leq p-1$. When $\frac{n}{2}<p\leq\frac{2}{3}n$, we need to consider the orders of $\sm{\lambda_j,\lambda_{j'}}$ for $p+1\leq j+j'$. By the last statement of Theorem~\ref{kishimoto}, $\sm{\lambda_j,\lambda_{j'}}$ is in $[C_{2j-1}\wedge C_{2j'-1}, S^{2(j+j'-p)+1}]$ if $j+j'\leq2p-1$. Since $j,j'\leq t\leq p$, this can always be achieved for $n\neq2p-1$. There is a graph of short exact sequences
\[\xymatrix{
											&													&\pi_{2j+2j'+2p-4}(S^{2(j+j'-p)+1})^{\oplus2}\ar[d]\\
\pi_{2j+2j'+4p-6}(S^{2(j+j'-p)+1})\ar[r]	&[C_{2j-1}\wedge C_{2j'-1},S^{2(j+j'-p)+1}]\ar[r]	&[C',S^{2(j+j'-p)+1}]\ar[d]\\
											&													&\pi_{2j+2j'-2}(S^{2(j+j'-p)+1})
}\]
where $C'$ is the subcomplex of $C_{2j-1}\wedge C_{2j'-1}$ without the top cell. By Theorem~\ref{toda}, the three homotopy groups are $\Z/p\Z$. The exactness of the column and the row implies that the orders of $[C',S^{2(j+j'-p)+1}]$ and $[C_{2j-1}\wedge C_{2j'-1},S^{2(j+j'-p)+1}]$ are at most $p^2$ and $p^3$. Therefore~$\sm{\lambda_j,\lambda_{j'}}$ has order at most $p^3$ when $n\neq2p-1$ and $\frac{n}{2}<p\leq \frac{2}{3}n$.

We summarize the above discussion in the following table:
\begin{center}
\begin{tabular}{|>{\centering\arraybackslash}p{3.5cm}|>{\centering\arraybackslash}p{2cm}|>{\centering\arraybackslash}p{2cm}|>{\centering\arraybackslash}p{2cm}|}
\hline
								&\multicolumn{3}{ |c| }{an upper bound on the order of}\\
\cline{2-4}
								&$\sm{\epsilon_i,\epsilon_{i'}}$	&$\sm{\epsilon_i,\lambda_j}$	&$\sm{\lambda_j,\lambda_{j'}}$\\
\hline
$\frac{2}{3}n+1\leq p\leq n$	&$p$								&$p^2$							&$1$\\
\hline
$\begin{array}{c}
\frac{n}{2}<p\leq\frac{2}{3}n,\\
n\neq2p-1
\end{array}$
								&$p$								&$p^2$							&$p^3$\\
\hline
$n=2p-1$						&$p$								&$p^2$							&$p^6$\\
\hline
\end{tabular}
\end{center}
By Theorem~\ref{main}, the order of $\sm{\one_G,\one_G}$ equals the order of $\sm{\imath,\imath}$ which is the least common multiple of the orders of $\sm{\epsilon_i,\epsilon_{i'}},\sm{\epsilon_i,\lambda_j}$ and $\sm{\lambda_j,\lambda_{j'}}$, so the statement follows.
\end{proof}

\subsubsection*{Case III: $G$ is a quasi-$p$-regular exceptional Lie group and $p\geq7$}
Suppose $p\geq7$ and $G$ is a quasi-$p$-regular exceptional Lie group. That is
\begin{itemize}
\item	when $G=F_4$ or $E_6$, $p=7$ or $11$;
\item	when $G=E_7$, $p=11,13$ or $17$;
\item	when $G=E_8$, $p=11,13,17, 23$ or $29$.
\end{itemize}
For each case, we can calculate bounds on the orders of $[A_i\wedge A_j,B_k]$ for all $i,j$ and $k$ in~(\ref{A_wedge_A_decomp}) according to the CW-structure of $A$. Then we obtain the following statement.

\begin{thm}\label{thm_bound_quasi_except}
For $p\geq7$, suppose $G$ is a quasi-$p$-regular exceptional Lie group which is not~$p$-regular. Let the order of $\sm{\one_G,\one_G}$ be $p^r$. Then we have the following table
\[
\begin{array}{|c|c|c|}
\hline
G			&p				&\text{value(s) of }r\\
\hline
F_4			&7				&1\leq r\leq4\\
			&11				&1\\
\hline
E_6			&7				&1\leq r\leq4\\
			&11				&1\\
\hline
E_7			&11				&1\leq r\leq3\\
			&13				&1\text{ or }2\\
			&17				&1\\
\hline
E_8			&11				&1\leq r\leq6\\
			&13				&1\leq r\leq4\\
			&17				&1\leq r\leq3\\
			&19				&1\leq r\leq4\\
			&23,29			&1\\
\hline
\end{array}
\]
\end{thm}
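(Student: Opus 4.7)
The plan is to apply Theorem~\ref{main} case by case, together with the homotopy-group calculations of Theorem~\ref{toda} and the splitting (\ref{A_wedge_A_decomp}). First, since $p \geq 7$ and $G$ is a quasi-$p$-regular exceptional Lie group, Theriault's theorem gives $nil(G) \leq 2$; hence $nil(G) < p^r + 1$ for every $r \geq 1$, so Theorem~\ref{main} is applicable and reduces the problem to determining the order of $\sm{\imath, \imath}$.

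For the lower bound $r \geq 1$, I would invoke McGibbon's criterion: for each listed pair $(G, p)$ one has $p \leq 2n_l$ (and $(G, p)$ is not among the homotopy commutative exceptions $(Sp(2), 3)$ or $(G_2, 5)$), so $G$ is not homotopy commutative at $p$ and $\sm{\one_G, \one_G}$ is not null-homotopic. Because everything is $p$-localized, the order is a power of $p$ and must be at least $p$, giving $r \geq 1$.

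For the upper bound, I would expand
\[
\sm{\imath,\imath} \in [A \wedge A, G] \cong \prod_{i,j,k=1}^{p-1} [A_i \wedge A_j, B_k]
\]
and bound each summand using Theorem~\ref{toda}. For each pair $(G, p)$ one reads off the splitting of $G$ in (\ref{quasi_list}) (supplying the analogous quasi-$p$-regular splitting $B(3, 39) \times B(23, 59) \times S^{15} \times S^{27} \times S^{35} \times S^{47}$ for $E_8$ at $p = 19$, which is missing from the displayed list), identifies which $A_i$ are spheres $S^{2n_i-1}$ and which are two-cell complexes $C_{2n_i-1}$, and for each nontrivial triple $(i, j, k)$ computes an upper bound on the order of $[A_i \wedge A_j, B_k]$ via the exact-sequence/cofibration analysis of Proposition~\ref{bound_general}, applied to the specific dimensions arising. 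A target of the form $B_k = B(2n-1, 2n+2p-3)$ contributes the strongest bounds, because its homotopy groups can contain $\Z/p^2$; when $B_k$ is a sphere or of the form $B(3, 2p+1)$, Theorem~\ref{toda} shows the relevant homotopy groups have exponent at most $p$. Summing exponents over the surviving $(i,j,k)$ and taking the least common multiple gives the claimed upper bound on $\sm{\imath,\imath}$, which by Theorem~\ref{main} is also the bound on $\sm{\one_G,\one_G}$.

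The main obstacle is bookkeeping: for each of the eleven listed pairs $(G, p)$, one must enumerate the factors $[A_i \wedge A_j, B_k]$, check that the target dimensions $2(n_i + n_j) - 2$, $2(n_i + n_j) + 2p - 4$, $2(n_i + n_j) + 4p - 6$ fall within the range $1 \leq k \leq 2p(p-1) - 3$ of Theorem~\ref{toda}, and track which contributions are nontrivial. Most contributions vanish because the target homotopy group is zero in that dimension; the exponents in the theorem statement reflect exactly how many nonzero contributions survive and how many of those are $\Z/p^2$ rather than $\Z/p$. No new conceptual ingredients are needed beyond the bounds assembled in Proposition~\ref{bound_general}; the work is entirely in the case-by-case enumeration.
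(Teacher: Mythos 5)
Your proposal follows the paper's own argument essentially verbatim: Theriault's nilpotency bound makes Theorem~\ref{main} applicable, McGibbon's criterion supplies the lower bound $r\geq1$, and the upper bounds come from decomposing $[A\wedge A,G]$ as in (\ref{A_wedge_A_decomp}) and bounding each factor $[A_i\wedge A_j,B_k]$ cell-by-cell via Theorem~\ref{toda}, exactly as in the discussion leading to Proposition~\ref{bound_general}. The paper likewise leaves the case-by-case enumeration implicit, so your plan is as complete as the published proof (and your observation about the $E_8$, $p=19$ splitting absent from (\ref{quasi_list}) is a fair catch); just be careful that within each factor the exponents are added along the cofibration exact sequences while across the factors $(i,j,k)$ one takes the least common multiple, not a sum.
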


\begin{remark}
It would be interesting if the precise order of $\sm{\one_G,\one_G}$ could be obtained in the case of Theorem~\ref{thm_bound_quasi_except}.
\end{remark}

\end{document}